\documentclass[final,1p,times]{elsarticle}
\usepackage{amssymb}
\pdfoutput=1

\usepackage{mathrsfs}
\usepackage{epsfig, graphicx}
\usepackage{latexsym,amsfonts,amsbsy,amssymb}
\usepackage{amsmath,amsthm}
\usepackage{color}
\usepackage[colorlinks, citecolor=blue]{hyperref}
\usepackage{enumerate}
\usepackage{float}
 \usepackage{bbding}

\newtheorem{theorem}{Theorem}[section]
\newtheorem{lemma}{Lemma}[section]

\newtheorem{remark}{Remark}[section]

\newtheorem{definition}{Definition}[section]

\journal{}

\begin{document}

\begin{frontmatter}



\title{Local Fourier analysis for mixed finite-element methods for the Stokes equations}


\author[A]{Yunhui He\corref{cor1}}
\ead{yunhui.he@mun.ca}

\author[A]{Scott P. MacLachlan}
\ead{smaclachlan@mun.ca}
\cortext[cor1]{Corresponding author: Yunhui He.  E-mail address: yunhui.he@mun.ca.}

\address[A]{Department of Mathematics and Statistics, Memorial University of Newfoundland, St. John's, NL A1C 5S7, Canada}

\begin{abstract}
 In this paper, we develop a local Fourier analysis of multigrid methods based on block-structured relaxation schemes for stable and stabilized mixed finite-element discretizations of  the Stokes equations, to analyze their convergence behavior. Three  relaxation schemes are considered: distributive, Braess-Sarazin, and Uzawa relaxation.  From this analysis, parameters that minimize the  local Fourier analysis smoothing factor are proposed for the stabilized methods with distributive and Braess-Sarazin relaxation. Considering the failure of the local Fourier analysis smoothing factor in predicting the true two-grid convergence factor for the stable discretization, we numerically optimize the two-grid convergence predicted by local Fourier analysis in this case. We also compare the efficiency of the presented algorithms with variants using inexact solvers.  Finally, some numerical experiments are presented to validate the two-grid and multigrid convergence factors.

\end{abstract}

\begin{keyword}
Monolithic multigrid, Block-structured relaxation, local Fourier analysis,  mixed finite-element methods, Stokes Equations

\MSC 65N55, 65F10, 65F08, 76M10
\end{keyword}

\end{frontmatter}



\section{Introduction}
\label{sec:intro}
In recent years, substantial research has been devoted to efficient numerical solution of the Stokes and Navier-Stokes equations, due both to their utility as models of (viscous) fluids and their commonalities with many other physical problems that lead to saddle-point systems (see, for example \cite{elman2006finite}, and many of the other references cited here). In the linear (or linearized) case, solution of the resulting matrix equations is seen to be difficult, due to indefiniteness and the usual ill-conditioning of discretized PDEs. In the literature, block preconditioners (cf. \cite{elman2006finite} and the references therein) are widely used, due to their easy construction  from standard multigrid algorithms for scalar elliptic PDEs, such as algebraic multigrid \cite{ruge1987algebraic}. However, monolithic multigrid approaches \cite{adler2016monolithic,MR3439774,MR558216,MR2833487,MR2001083} have been shown to outperform these preconditioners when algorithmic parameters are properly chosen \cite{MR3639325,farrell2018augmented}.  The focus of this work is on the analysis of such monolithic multigrid methods in the case of stable and stabilized finite-element discretizations of the Stokes equations.

Local Fourier analysis (LFA) \cite{MR1807961,wienands2004practical} has been widely used to predict the convergence behavior of
multigrid methods, to help design relaxation schemes and choose algorithmic parameters. In general, the LFA smoothing factor provides a sharp prediction of actual multigrid convergence, see \cite{MR1807961}, under the assumption of an ``ideal'' coarse-grid correction scheme (CGC) that annihilates  low-frequency error components and leaves  high-frequency components unchanged.  In practice, the LFA smoothing and two-grid convergence  factors often  exactly match the true convergence factor of multigrid applied to a problem with periodic boundary conditions \cite{brandt1994rigorous,stevenson1990validity,MR1807961}. Recently, the validity of LFA  has been further analysed \cite{rodrigo2017validity}, extending this exact prediction to a wider class of problems. However, the LFA smoothing factor is also known to lose its predictivity of the true convergence in some cases \cite{HM2018LFALaplace,MR2840198,friedhoff2013local}.  In particular, the smoothing factor of LFA overestimates the two-grid  convergence factor for the Taylor-Hood ($Q_2-Q_1$) discretization of the Stokes equations with Vanka relaxation \cite{MR2840198}. Even for the  scalar Laplace operator, the LFA smoothing factor fails to predict the observed multigrid convergence factor for higher-order finite-element methods \cite{HM2018LFALaplace}.

Two main questions interest us here. First, we look to extend the study of \cite{MR2840198} to consider LFA of block-structured relaxation schemes for finite-element discretizations of the Stokes equations. Secondly, we consider if the LFA smoothing factor can predict the convergence factors for these relaxation schemes.   Recently, LFA for multigrid based on block-structured relaxation schemes applied to the marker-and-cell (MAC) finite-difference discretization of the Stokes equations was shown to give a good prediction of convergence \cite{NLA2147}, in contrast to the results of \cite{MR2840198}. Thus, a natural question to investigate is whether the contrasting results between  \cite{NLA2147} and \cite{MR2840198} is due to the differences in discretization or those in the relaxation schemes considered. Here, we apply the relaxation schemes of \cite{NLA2147} to the $Q_2-Q_1$ discretization from \cite{MR2840198}, as well as an ``intermediate'' discretization using stabilized $Q_1-Q_1$ approaches.

In recent decades, many block relaxation schemes have been studied and applied to many problems, including Braess-Sarazin-type relaxation schemes \cite{adler2016monolithic,braess1997efficient,MR1685458,MR3439774,MR1810326}, Vanka-type relaxation schemes \cite{adler2016monolithic,MR848451,MR3439774,MR2840198,MR2263039,MR3488076,MR2001083}, Uzawa-type relaxation schemes \cite{john2016analysis,MR3217219,MR3580776,MR2833487,MR833993}, distributive relaxation schemes \cite{bacuta2011new,MR558216,wittum1989multi,MR3071182,oosterlee2006multigrid} and other types of methods \cite{MR3427891,MR3432850}. Even though LFA has been applied to distributive relaxation \cite{MR1049395,wienands2004practical}, Vanka relaxation \cite{MR2840198,MR3488076,sivaloganathan1991use,MR1131567}, and Uzawa-type schemes \cite{MR3217219} for the Stokes equations, most of the existing LFA has been for relaxation schemes using (symmetric) Gauss-Seidel (GS) approaches, and for simple finite-difference and finite-element discretizations.  Considering modern multicore and accelerated parallel architectures, we focus on schemes based on weighted Jacobi relaxation with distributive, Braess-Sarazin, and Uzawa relaxation for common finite-element discretizations of the Stokes equations.

Some key conclusions of this analysis are as follows.  First, while the LFA smoothing factor gives a good prediction of the true convergence factor for the stabilized discretizations with distributive weighted Jacobi and Braess-Sarazin relaxation, it does not for the Uzawa relaxation (in contrast to what is seen for the MAC discretization \cite{NLA2147,MR1049395}).  For no cases does the LFA smoothing factor offer a good prediction of the true convergence behaviour for the (stable) $Q_2-Q_1$ discretization, suggesting that the discretization is responsible for the lack of predictivity, consistent with the results in \cite{HM2018LFALaplace,MR2840198}.  For both stable and stabilized discretizations, we see that standard distributive weighted Jacobi relaxation loses some of its high efficiency, in contrast to what is seen for the MAC scheme \cite{NLA2147,MR1049395} but that robustness can be restored with an additional relaxation sweep.  Exact Braess-Sarazin relaxation is also highly effective, with LFA-predicted $W(1,1)$ convergence factors of $\frac{1}{9}$ in the stabilized cases and $\frac{1}{4}$ in the stable case.  To realize these rates with inexact cycles, however, requires nested W-cycles to solve the approximate Schur complement equation accurately enough in the stabilized case, although simple weighted Jacobi on the approximate Schur complement is observed to be sufficient in the stable case.  For Uzawa-type relaxation, we see a notable gap between predicted convergence with exact inversion of the resulting Schur complement, versus inexact inversion, although some improvement is seen when replacing the approximate Schur complement with a mass matrix approximation, as is commonly used in block-diagonal preconditioners  \cite{wathen2009chebyshev,wathen1993fast,silvester1994fast}.  Overall, however, we see that distributive weighted Jacobi (DWJ) (with 2 sweeps of  Jacobi relaxation on the pressure equation)  outperforms both Braess-Sarazin relaxation (BSR) and Uzawa relaxation, for the stabilized discretizations, while DWJ and inexact BSR offer comparable performance for the stable discretization.

We organize this paper as follows. In Section \ref{sec:Q2Q1-FEM}, we introduce two stabilized $Q_1-Q_1$ and the stable $Q_2-Q_1$ mixed finite-element discretizations of the Stokes equations in two dimensions (2D). In Section \ref{sec:defi-FEM}, we first review the LFA approach, then discuss the Fourier representation for these discretizations.  In Section \ref{LFA-relaxation-FEM}, LFA is developed for DWJ, BSR, and Uzawa-type relaxation,  and optimal LFA smoothing factors are derived for the two stabilized $Q_1-Q_1$ methods with DWJ and BSR. Multigrid performance is presented to validate the theoretical results.  Section \ref{sec:Q2Q1performance} exhibits  optimized  LFA two-grid convergence factors and measured multigrid convergence
  factors for the $Q_2-Q_1$ discretization. Furthermore, a comparison of the cost and effectiveness of the  relaxation schemes is given.  Conclusions are presented in Section \ref{sec:concl-FEM}.

\section{Discretizations}
\label{sec:Q2Q1-FEM}
In this paper, we consider the Stokes equations,
\begin{eqnarray}
  -\Delta\vec{u}+\nabla p&=&\vec{f},\label{Stokes1-FEM}\\
  \nabla\cdot \vec{u}&=&0,\nonumber
\end{eqnarray}
where $\vec{u}$ is the velocity vector, $p$ is the (scalar) pressure of a viscous fluid, and $\vec{f}$ represents a (known) forcing term, together with suitable boundary conditions. Because of the nature of LFA, we validate our predictions against the problem with periodic boundary conditions on both $\vec{u}$ and $p$. Discretizations of (\ref{Stokes1-FEM}) typically lead to a linear system of the following form:
\begin{equation}\label{saddle-structure-FEM}
     Kx=\begin{pmatrix}
      A & B^{T}\\
     B & - \beta C\\
    \end{pmatrix}
        \begin{pmatrix} \mathcal{U }\\ {\rm p}\end{pmatrix}
  =\begin{pmatrix} {\rm f} \\ 0 \end{pmatrix}=b,
 \end{equation}
where $A$ corresponds to the discretized vector Laplacian, and $B$ is the negative of the discrete divergence operator. If the discretization is naturally unstable, then $C\neq 0$ is the stabilization matrix, otherwise $C=0$. In this paper, we discuss two stabilized $Q_1-Q_1$ and  the stable $Q_2-Q_1$  finite-element discretizations.

The natural  finite-element approximation of Problem (\ref{Stokes1-FEM}) is:
Find $\vec{u}_h\in \mathcal{X}^h$ and $p_h\in \mathcal{H}^{h}$  such that
\begin{equation}\label{mixed-FEM-form}
a(\vec{u}_h,\vec{v}_h) + b(p_h,\vec{v}_h)+b(q_h,\vec{u}_h) =g(\vec{v}_h),\,\, {\rm for\,\, all}\, \vec{v}_h\in\mathcal{X}_{0}^h \,\,{\rm and}\,\, q_h\in\mathcal{H}^{h},
\end{equation}
where
\begin{eqnarray*}
a(\vec{u}_h,\vec{v}_h)& = &\int_{\Omega}\nabla \vec{u}_h:\nabla \vec{v}_h,\,\,\, b(p_h,\vec{v}_h)=-\int_{\Omega} p_h \nabla \cdot \vec{v}_h,\nonumber\\
 g(\vec{v}_h) &=&\int_{\Omega} \vec{f}_h\cdot \vec{v}_h,
\end{eqnarray*}
and $\mathcal{X}^h\subset H^{1}(\Omega)$, $\mathcal{H}^{h}\subset L_2(\Omega)$ are finite-element spaces. Here, $\mathcal{X}_{0}^h \subset\mathcal{X}^h$ satisfies homogeneous Dirichlet boundary conditions in place of any non-homogenous essential boundary conditions on $\mathcal{X}^h$. Problem (\ref{mixed-FEM-form}) has a unique solution only when $\mathcal{X}^h$ and $\mathcal{H}^{h}$ satisfy an inf-sup condition (see \cite{elman2006finite,MR2373954,brezzi1988stabilized,dohrmann2004stabilized}).

\subsection{Stabilized $Q_1-Q_1$ discretizations}
The standard equal-order approximation of (\ref{mixed-FEM-form})  is  well-known to be unstable \cite{brezzi1988stabilized,elman2006finite}.  To circumvent this,  a scaled  pressure Laplacian term  can be added to (\ref{mixed-FEM-form});  for a uniform mesh with square elements of size $h$, we subtract
\begin{equation*}
  c(p_h,q_h) = \beta h^2(\nabla p_h,\nabla q_h),
\end{equation*}
for $\beta>0$.  With this, the resulting linear system is given by
\begin{equation*}
     \begin{pmatrix}
      A & B^{T}\\
     B & - \beta h^2 A_p \\
    \end{pmatrix}
        \begin{pmatrix} \mathcal{U} \\ {\rm p }\end{pmatrix}
  =\begin{pmatrix} {\rm f} \\ 0 \end{pmatrix}=b,
 \end{equation*}
where $A_p$ is the $Q_1$ Laplacian operator  for the pressure. Denote $S= BA^{-1}B^{T}$, and  $S_{\beta}= BA^{-1}B^{T} +\beta C $, where $C=h^2 A_p$. From \cite{elman2006finite}, the red-black unstable mode $\textbf{p} = \pm \bf{1}$,
can be  moved from a zero eigenvalue to a unit eigenvalue ( giving stability
without loss of accuracy) by choosing $\beta$ so that
\begin{equation}\label{beta-choosing}
 \frac{\textbf{p}^{T}S_{\beta}\textbf{p}}{\textbf{p}^{T}Q\textbf{p}}= \beta\frac{\textbf{p}^{T}C\textbf{p}}{\textbf{p}^{T}Q\textbf{p}}=1,
\end{equation}
where $Q$ is the mass matrix. Substituting the bilinear stiffness and mass matrices  into  (\ref{beta-choosing}), we find $\beta=\frac{1}{24}$.  We refer to this  method as the Poisson-stabilized discretization (PoSD).

An $L_2$ projection to stabilize  the $Q_1-Q_1$  discretization, proposed in
 \cite{dohrmann2004stabilized}, stabilizes with
\begin{equation}\label{projection-stabilized}
  C(p_h,q_h) = (p_h-\Pi_{0}p_h,q_h-\Pi_{0}q_h),
\end{equation}
where $\Pi_{0}$ is the $L_{2}$ projection from $\mathcal{H}^{h}$ into the space of piecewise constant functions on the mesh. We refer to this method as the projection stabilized discretization (PrSD).
 The $4\times 4$ element matrix $C_4$ of (\ref{projection-stabilized}) is given by
\begin{equation*}\label{prejection-4x4-element}
  C_4 = Q_4-\textbf{q}\textbf{q}^{T}h^2,
\end{equation*}
where $Q_4$ is the $4\times 4$ element mass matrix for the bilinear discretization and
$\textbf{q}=\begin{bmatrix}
  \frac{1}{4} & \frac{1}{4} & \frac{1}{4} &\frac{1}{4}
   \end{bmatrix}^{T}$.
In the projection stabilized method, we can write  $C=Q-h^2P$,
where $P$ is given by the 9-point stencil
\begin{equation*}
  P = \frac{1}{4}\begin{bmatrix}
   \frac{1}{4} &  \frac{1}{2}     & \frac{1}{4} \\
   \frac{1}{2}  &  1   & \frac{1}{2} \\
   \frac{1}{4}  &  \frac{1}{2}     & \frac{1}{4}
   \end{bmatrix}.
\end{equation*}
Applying (\ref{beta-choosing}) to $C=Q-h^2P$, we find that $\beta=1$ is the optimal choice.

\subsection{Stable $Q_2-Q_1$ discretizations}
In order to guarantee the well-posedness of the discrete system (\ref{saddle-structure-FEM}) with $C=0$, the discretization of the velocity and pressure unknowns should satisfy an inf-sup condition,
\begin{equation*}
  \inf_{q_h\neq 0} \sup_{\vec{v}_h\neq \vec{0}}\frac{|b(q_h,\vec{v}_h)|}{\|\vec{v}_h\|_{1}\|q_h\|_{0}}\geq \Gamma>0,
\end{equation*}
where $\Gamma$ is a constant. Taylor-Hood ($Q_2-Q_1$) elements are well known to be
stable \cite{MR2373954, elman2006finite}, where the basis functions associated with these elements are biquadratic
for each component of the velocity field and bilinear for the pressure.

\section{LFA preliminaries}\label{sec:defi-FEM}

\subsection{Definitions and notations}
In many cases, the LFA smoothing factor offers a good prediction of multigrid performance. Thus, we will explore the LFA smoothing factor and true (measured) multigrid convergence  for the three types of relaxations considered here. We first introduce some terminology of LFA, following \cite{MR1807961,wienands2004practical}.
We consider the following two-dimensional infinite uniform grids,
\begin{equation*}
  \mathbf{G}^{j}_{h}=\big\{\boldsymbol{x}^{j}:=(x^j_1,x^j_2)=(k_{1},k_{2})h+\delta^{j},(k_1,k_2)\in \mathbb{Z}^2\big\},
\end{equation*}
with
\begin{equation*}
\delta^{j}=\left\{
  \begin{aligned}
    &(0,0) &\text{if}\quad j=1,\\
    &(0,h/2)  &\text{if} \quad j=2, \\
    &(h/2,0)  &\text{if} \quad j=3, \\
    &(h/2,h/2) &\text{if}\quad j=4.\\
  \end{aligned}
               \right.
\end{equation*}
The coarse grids, $\mathbf{G}^j_{2h}$, are defined similarly.

\begin{figure}[H]
\centering
\includegraphics[width=6.5cm,height=5.5cm]{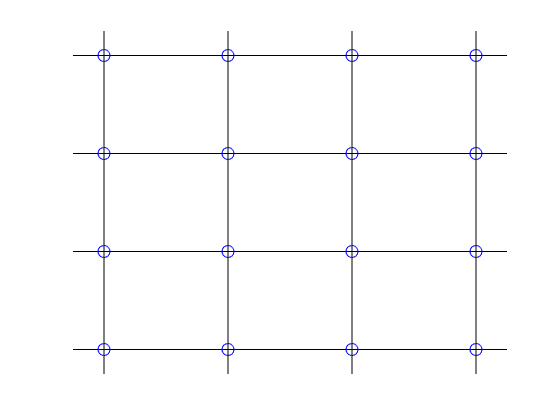}
\includegraphics[width=6.5cm,height=5.5cm]{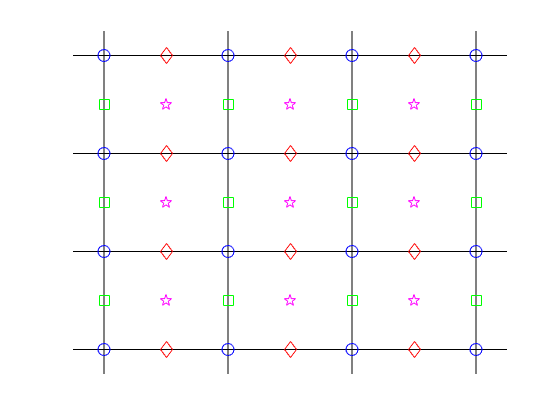}
\caption{At left, the mesh used for $Q_1$ discretization. At right, the mesh used for $Q_2$ discretization. Points marked by $\circ$ correspond to $\mathbf{G}^{1}_{h}$, those marked by $\Diamond$ correspond to $\mathbf{G}^{2}_{h}$, those marked by $\Box$ correspond to $\mathbf{G}^{3}_{h}$ and those marked by \FiveStarOpen   \, correspond to $\mathbf{G}^{4}_{h}$.} \label{Q2-Q1-mesh}
\end{figure}

Let $L_h$ be a scalar Toeplitz operator defined by its stencil acting on $\mathbf{G}^{j}_{h}$ as follows:
\begin{eqnarray}\label{defi-symbol-FEM}
  L_{h}  &\overset{\wedge}{=}& [s_{\boldsymbol{\kappa}}]_{h} \,\,(\boldsymbol{\kappa}=(\kappa_{1},\kappa_{2})\in \boldsymbol{V}); \,
  L_{h}w_{h}(\boldsymbol{x}^j)=\sum_{\boldsymbol{\kappa}\in\boldsymbol{V}}s_{\boldsymbol{\kappa}}w_{h}(\boldsymbol{x}^j+\boldsymbol{\kappa}h),
  \end{eqnarray}
with constant coefficients $s_{\boldsymbol{\kappa}}\in \mathbb{R} \,(\textrm{or} \,\,\mathbb{C})$, where $w_{h}(\boldsymbol{x}^j)$ is
a function in $l^2 (\mathbf{G}^j_{h})$. Here, $\boldsymbol{V}\subset \mathbb{Z}^2$ is a finite index set. Because $L_h$ is formally diagonalized by the Fourier modes $\varphi(\boldsymbol{\theta},\boldsymbol{x}^j)= e^{i\boldsymbol{\theta}\cdot\boldsymbol{x}^j/\boldsymbol{h}}=e^{i\theta_1x^j_1/h}e^{i\theta_2x^j_2/h}$, where $\boldsymbol{\theta}=(\theta_1,\theta_2)$ and $i^2=-1$, we use $\varphi(\boldsymbol{\theta},\boldsymbol{x}^j)$ as a Fourier basis with $\boldsymbol{\theta}\in \big[-\frac{\pi}{2},\frac{3\pi}{2}\big)^{2}$. High and low frequencies for standard coarsening (as considered here) are given by
\begin{equation*}
  \boldsymbol{\theta}\in T^{{\rm low}} =\left[-\frac{\pi}{2},\frac{\pi}{2}\right)^{2}, \, \boldsymbol{\theta}\in T^{{\rm high}} =\displaystyle \left[-\frac{\pi}{2},\frac{3\pi}{2}\right)^{2} \bigg\backslash \left[-\frac{\pi}{2},\frac{\pi}{2}\right)^{2}.
\end{equation*}

\begin{definition}\label{formulation-symbol-FEM}
 If, for all  functions $\varphi(\boldsymbol{\theta},\boldsymbol{x}^j)$,
 \begin{equation*}
   L_{h}\varphi(\boldsymbol{\theta},\boldsymbol{x}^j)= \widetilde{L}_{h} (\boldsymbol{\theta})\varphi(\boldsymbol{\theta},\boldsymbol{x}^j),
 \end{equation*}
 we call $\widetilde{L}_{h}(\boldsymbol{\theta})=\displaystyle\sum_{\boldsymbol{\kappa}\in\boldsymbol{V}}s_{\boldsymbol{\kappa}}e^{i\boldsymbol{\theta}\boldsymbol{\kappa}}$ the symbol of $L_{h}$.
\end{definition}

In what follows, we consider $(3\times 3)$ linear systems of operators, which read
\begin{equation}\label{block-system-stencil}
  \mathcal{L}_h=\begin{pmatrix}
       L_h^{1,1}&         L_h^{1,2} &              L_h^{1,3}  \\
       L_h^{2,1}&         L_h^{2,2} &              L_h^{2,3} \\
       L_h^{3,1}&         L_h^{3,2} &              L_h^{3,3} \\
    \end{pmatrix}=\begin{pmatrix}
      -\Delta_{h} & 0 &  (\partial_{x})_{h}\\
      0 & -\Delta_{h} & (\partial_{y})_{h} \\
      -(\partial_{x})_{h}  & -(\partial_{y})_{h} & L_h^{3,3}
    \end{pmatrix},
\end{equation}
where $L^{3,3}_h$ depends on which discretization we use.

 For the stabilized $Q_1-Q_1$ approximations,  the degrees of freedom for both velocity and pressure are only located on $\mathbf{G}^{1}_h$ as pictured at left of Figure \ref{Q2-Q1-mesh}. In this setting, the $L_h^{k,\ell}(k,\ell=1,2,3)$ in (\ref{block-system-stencil}) are scalar Toeplitz operators. Denote $\widetilde{\mathcal{L}}_h$ as the symbol of $\mathcal{L}_h$. Each entry in $\widetilde{\mathcal{L}}_{h}$ is computed as the (scalar) symbol of the corresponding block of $L^{k,\ell}_{h}$, following Definition
 \ref{formulation-symbol-FEM}.  Thus, $\widetilde{\mathcal{L}}_h$ is a $3\times 3$ matrix. All blocks in $\mathcal{L}_h$ are diagonalized by the same
 transformation on a collocated mesh.

However, for the $Q_2-Q_1$ discretization, the degrees of freedom for velocity are located on $\mathbf{G}_h=\bigcup_{j=1}^4\mathbf{G}_h^{j}$, containing four types of meshpoints  as shown at right of Figure \ref{Q2-Q1-mesh}. The Laplace operator in (\ref{block-system-stencil}) is defined by extending (\ref{defi-symbol-FEM}), with $\boldsymbol{V}$ taken to be a finite index set of values, $\boldsymbol{V}=V_{N}\bigcup V_{X}\bigcup V_{Y}\bigcup V_{C}$ with $V_{N}\subset\mathbb{Z}^2$, $V_{X}\subset\big\{(z_x+\frac{1}{2},z_y)|(z_x,z_y)\in\mathbb{Z}^2\big\}$, $V_{Y}\subset\big\{(z_x,z_y+\frac{1}{2})|(z_x,z_y)\in\mathbb{Z}^2\big\}$, and $V_{C}\subset\big\{(z_x+\frac{1}{2},z_y+\frac{1}{2})|(z_x,z_y)\in\mathbb{Z}^2\bigg\}$. With this, the (scalar) $Q_2$ Laplace operator is naturally treated as  a block operator,  and the Fourier representation of each block can be calculated based on Definition \ref{formulation-symbol-FEM}, with the Fourier bases adapted to account for the staggering of the mesh points. Thus, the symbols of $L_h^{1,1}$ and  $L_h^{2,2}$  are  $4\times 4$ matrices. For more details of LFA for the Laplace operator using higher-order finite-element methods,  refer to \cite{HM2018LFALaplace}.  Similarly to the Laplace operator, both terms in the gradient, $(\partial_x)_h$ and $(\partial_y)_h$, can be treated as  ($4\times 1$)-block operators. Then, the symbols of $L^{1,3}_h$ and $L^{2,3}_h$ are $4\times 1$ matrices, calculated based on Definition \ref{formulation-symbol-FEM} adapted for the mesh staggering. The symbols of $L^{3,1}_h$ and $L^{3,2}_h$ are the conjugate transposes of those of $L^{1,3}_h$ and $L^{2,3}_h$, respectively. Finally, $L^{3,3}_h=0$. Accordingly, $\widetilde{\mathcal{L}}_{h}$ is a $9\times9$ matrix for the $Q_2-Q_1$ discretization.

\begin{definition}\label{err-prop-LFA-FEM}
 The error-propagation symbol, $\widetilde{\mathcal{S}}_{h}(\boldsymbol{\theta})$, for a block smoother $\mathcal{S}_{h}$ on the infinite grid  $\mathbf{G}_{h}$ satisfies
\begin{equation*}
  \mathcal{S}_{h}\varphi(\boldsymbol{\theta},\boldsymbol{x})=\widetilde{\mathcal{S}}_{h}\varphi(\boldsymbol{\theta},\boldsymbol{x}), \,\,\boldsymbol{\theta}\in \bigg[-\frac{\pi}{2},\frac{3\pi}{2}\bigg)^{2},
\end{equation*}
for all $\varphi(\boldsymbol{\theta},\boldsymbol{x})$, and the corresponding smoothing factor for $\mathcal{S}_{h}$ is given by
\begin{equation*}
  \mu_{{\rm loc}}=\mu_{{\rm loc}}(\mathcal{S}_{h})=\max_{\boldsymbol{\theta}\in T^{{
  \rm high}}}\big\{\big|\lambda(\widetilde{\mathcal{S}}_{h}(\boldsymbol{\theta}))\big| \,\,\big\},
\end{equation*}
where $\lambda$ is an eigenvalue of $\widetilde{\mathcal{S}}_{h}(\boldsymbol{\theta})$.
\end{definition}
In Definition \ref{err-prop-LFA-FEM}, $\mathbf{G}_{h}=\mathbf{G}^1_{h}$ for the stabilized case (and $\widetilde{\mathcal{S}}_{h}(\boldsymbol{\theta})$ is a $3\times 3$ matrix) and $\mathbf{G}_h=\bigcup_{j=1}^4\mathbf{G}_h^{j}$  for the stable case (where $\widetilde{\mathcal{S}}_{h}(\boldsymbol{\theta})$ is a $9\times 9$ matrix).

The error-propagation symbol for a relaxation scheme, represented by matrix $M_h$, applied to  either the stabilized or  stable scheme is written as
\begin{equation*}
\widetilde{ \mathcal{S}}_h(\boldsymbol{p},\omega,\boldsymbol{\theta})=I-\omega \widetilde{M}_h^{-1}(\boldsymbol{\theta})\widetilde{\mathcal{L}}_h(\boldsymbol{\theta}),
\end{equation*}
where $\boldsymbol{p}$ represents parameters within $M_h$, the block approximation to $\mathcal{L}_h$, $\omega$ is an overall weighting factor,
and $\widetilde{M}_h$ and $\widetilde{\mathcal{L}}_h$ are the symbols for $M_h$ and $\mathcal{L}_h$, respectively.  Note that $\mu_{\rm loc}$ is a function of some parameters in Definition \ref{err-prop-LFA-FEM}. In this paper, we  focus on minimizing $\mu_{\rm loc}$ with respect to these parameters, to obtain the optimal LFA smoothing factor.
\begin{definition}\label{def-opt-FEM}
  Let $\mathcal{D}$ be the set of allowable parameters and define the optimal smoothing factor over $\mathcal{D}$ as
  \begin{equation*}
    \mu_{{\rm opt}}=\min_{\mathcal{D}}{\mu_{{\rm loc}}}.
  \end{equation*}
\end{definition}

If the standard LFA assumption of an ``ideal'' CGC holds, then the two-grid convergence factor can be estimated by the smoothing factor, which is easy to compute. However, as expected, we will see that this idealized CGC does not lead to a good prediction for some cases we consider below. When the LFA smoothing factor fails to predict the true two-grid convergence factor, the LFA two-grid convergence factor can still be used. Thus, we give a brief introduction to the LFA two-grid convergence factor in the following.

Let
\begin{eqnarray*}
\boldsymbol{\alpha}&=&(\alpha_1,\alpha_2)\in\big\{(0,0),(1,0),(0,1),(1,1)\big\},\\
\boldsymbol{\theta}^{\boldsymbol{\alpha}}&=&(\theta_1^{\alpha_1},\theta_2^{\alpha_2})=\boldsymbol{\theta}+\pi\cdot\boldsymbol{\alpha},\,\,
\boldsymbol{\theta}:=\boldsymbol{\theta}^{00}\in T^{{\rm low}}.
\end{eqnarray*}
We use the ordering of $\boldsymbol{\alpha}=(0,0),(1,0),(0,1),(1,1)$ for the four harmonics. To apply LFA to the two-grid operator,
 \begin{equation}\label{TG-Operator-FEM}
    \boldsymbol{M}^{\rm TGM}_h= {\mathcal{ S}}^{\nu_2}_h\mathcal{M}^{{\rm CGC}}_h{\mathcal{ S}}^{\nu_1}_h,
 \end{equation}
we require the representation of the CGC operator,
 \begin{equation*}
   \mathcal{M}^{{\rm CGC}}_h=I-P_h ({\mathcal{L}}^{*}_{2h})^{-1}R_h \mathcal{L}_{h},
 \end{equation*}
 where  $P_h$ is the multigrid interpolation operator and $R_h$ is the restriction operator.  The coarse-grid operator, $\mathcal{L}^{*}_{2h}$, can be either the Galerkin or rediscretization operator.

 Inserting the representations of $\mathcal{S}_h, \mathcal{L}_h, \mathcal{L}^{*}_{2h}, P_h, R_h$ into (\ref{TG-Operator-FEM}), we obtain the Fourier representation of two-grid error-propagation operator as
\begin{equation*}
 \widetilde{ \boldsymbol{M}}^{\rm TGM}_h(\boldsymbol{\theta})= \widetilde{\boldsymbol {S}}^{\nu_2}_h(\boldsymbol{\theta})\big(I-\widetilde{\boldsymbol {P}}_h(\boldsymbol{\theta})(\widetilde{\mathcal{L}}^{*}_{2h}(2\boldsymbol{ \theta}))^{-1}\widetilde{\boldsymbol{ R}}_h(\boldsymbol{\theta})\widetilde{\boldsymbol{ L}}_{h}(\boldsymbol{\theta})\big)\widetilde{\boldsymbol{ S}}^{\nu_1}_h(\boldsymbol{\theta}),
\end{equation*}
where
\begin{eqnarray*}
\widetilde{\boldsymbol{L}}_h(\boldsymbol{\theta})&=&\text{diag}\left\{\widetilde{\mathcal{L}}_h(\boldsymbol{\theta}^{00}), \widetilde{\mathcal{L}}_h(\boldsymbol{\theta}^{10}),\widetilde{\mathcal{L}}_h(\boldsymbol{\theta}^{01}),
\widetilde{\mathcal{L}}_h(\boldsymbol{\theta}^{11})\right\},\\
\widetilde{\boldsymbol{S}}_h(\boldsymbol{\theta})&=&\text{diag}\left\{\widetilde{\mathcal{S}}_h(\boldsymbol{\theta}^{00}),
\widetilde{\mathcal{S}}_h(\boldsymbol{\theta}^{10}),\widetilde{\mathcal{S}}_h(\boldsymbol{\theta}^{01}),
\widetilde{\mathcal{S}}_h(\boldsymbol{\theta}^{11})\right\},\\
\widetilde{\boldsymbol{P}}_h(\boldsymbol{\theta})&=&\left(\widetilde{P}_h(\boldsymbol{\theta}^{00});\widetilde{P}_h(\boldsymbol{\theta}^{10});
\widetilde{P}_h(\boldsymbol{\theta}^{01});\widetilde{P}_h(\boldsymbol{\theta}^{11}) \right),\\
\widetilde{\boldsymbol{R}}_h(\boldsymbol{\theta})&=&\left(\widetilde{R}_h(\boldsymbol{\theta}^{00}),\widetilde{R}_h(\boldsymbol{\theta}^{10}),
\widetilde{R}_h(\boldsymbol{\theta}^{01}),\widetilde{R}_h(\boldsymbol{\theta}^{11}) \right),
\end{eqnarray*}
in which ${\rm diag}\{T_1,T_2,T_3,T_4\}$ stands for the block diagonal matrix with diagonal blocks, $T_1, T_2, T_3$, and $T_4$.

Here, we use the standard finite-element interpolation operators and their transposes for restriction. For $Q_1$, the symbol is well-known  \cite{MR1807961} while, for the nodal basis for $Q_2$, the symbol is given in \cite{HM2018LFALaplace}.
\begin{definition}
The asymptotic two-grid convergence factor, $\rho_{{\rm asp}}$, is defined as
\begin{equation*}\label{real-TGM}
  \rho_{{\rm asp}} = {\rm sup}\{\rho(\widetilde{\boldsymbol{M}}_h(\boldsymbol{\theta})^{{\rm TGM}}): \boldsymbol{\theta}\in T^{{\rm low}}\}.
\end{equation*}
\end{definition}
In what follows, we consider a discrete form of $\rho_{\rm asp}$, denoted by $\rho_{h}$, resulting from sampling $\rho_{\rm asp}$ over only a finite set of frequencies. For simplicity, we drop the subscript $h$ throughout the rest of this paper, unless necessary for clarity.
\subsection{Fourier representation of discretization operators}

\subsubsection{Fourier representation of the stabilized $Q_1-Q_1$ discretization}
 By standard calculation, the symbols of the $Q_1$ stiffness and mass stencils  are
\begin{eqnarray*}
  \widetilde{A}_{Q_1}(\theta_1,\theta_2)&=&\frac{2}{3}(4-\cos\theta_1-\cos\theta_2-2\cos\theta_1\cos\theta_2),\\
  \widetilde{M}_{Q_1}(\theta_1,\theta_2) &=&\frac{h^2}{9}(4+2\cos\theta_1+2\cos\theta_2+\cos\theta_1\cos\theta_2),
\end{eqnarray*}
respectively.  The stencils of the partial derivative operators $(\partial_{x})_{h}$ and $(\partial_{y})_{h}$ are
\begin{equation*}
  B_x^{T}=\frac{h}{12}\begin{bmatrix}
   -1 & 0 & 1\\
   -4 & 0 &4\\
   -1 & 0 & 1
   \end{bmatrix},\,\,  B_y^{T}=\frac{h}{12}\begin{bmatrix}
   1 & 4 & 1\\
   0 & 0 & 0\\
   -1& -4 &-1
   \end{bmatrix},
\end{equation*}
respectively, and the corresponding symbols are
\begin{equation*}
  \widetilde{B}_x^{T}(\theta_1,\theta_2)=\frac{ih}{3}\sin\theta_1(2+\cos\theta_2),\,\,\widetilde{B}_y^{T}(\theta_1,\theta_2)=\frac{ih}{3}(2+\cos\theta_1)\sin\theta_2,
\end{equation*}
where $T$ denotes the conjugate transpose.
Thus, the symbols of the stabilized finite-element discretizations of the Stokes equations are given by
 \begin{equation*}
   \widetilde{\mathcal{L}}(\theta_1,\theta_2)=\begin{pmatrix}
      \widetilde{ A}_{Q_1} & 0 &  \widetilde{B}_x^{T}\\
      0 & \widetilde{A}_{Q_1} & \widetilde{B}_y^{T} \\
      \widetilde{B}_x  & \widetilde{B}_y & \widetilde{L}_h^{3,3}
    \end{pmatrix}  :=\begin{pmatrix}
       a& 0 &  b_1\\
      0 &    a & b_2 \\
      - b_1    &  - b_2 & -c
    \end{pmatrix}.
  \end{equation*}
For the Poisson-stabilized discretization, the symbol of $-L_{h}^{3,3}$ is $c=c_1=a\beta h^2$.  For the projection stabilized method,
 following (\ref{projection-stabilized}),  the symbol of $-L_{h}^{3,3}$  is
 \begin{equation}\label{c2-form}
 c_2=\bigg(\frac{4+2\cos\theta_1+2\cos\theta_2+\cos\theta_1\cos\theta_2}{9}-\frac{(1+\cos\theta_1)(1+\cos\theta_2)}{4}\bigg)h^2.
 \end{equation}
 For convenience,  we write $-C$ for the last block of Equation (\ref{saddle-structure-FEM}), and its symbol as $-c$ in the rest of this paper.

\subsubsection{Fourier representation of stable $Q_2-Q_1$ discretizations}
The symbols of the stiffness and mass stencils for the $Q_2$ discretization using nodal basis functions in 1D are
\begin{equation*}
\widetilde{A}_{Q_2}(\theta) =\frac{1}{3h}\begin{pmatrix}
       14+2\cos\theta            &      -16\cos\frac{\theta}{2}  \\
        -16\cos\frac{\theta}{2}  &       16
    \end{pmatrix},\,\,\,
\widetilde{M}_{Q_2}(\theta)=\frac{h}{30}\begin{pmatrix}
       8-2\cos\theta            &      4\cos\frac{\theta}{2}  \\
        4\cos\frac{\theta}{2}  &       16
    \end{pmatrix},
\end{equation*}
respectively \cite{HM2018LFALaplace}. Here, we note that the $(1,1)$  entries correspond to the symbols associated with basis functions at the nodes of the mesh, while the $(2,2)$  entries correspond to the symbols associated with cell-centre (bubble) basis functions. The off-diagonal entries express the interaction between the two types of basis functions.  Then, the Fourier representation of $ -\Delta_{h}$ in 2D can be written as a tensor product,
\begin{eqnarray}
  \widetilde{A}_{2}(\theta_1,\theta_2) &=&\widetilde{A}_{Q_2}(\theta_2)\otimes \widetilde{M}_{Q_2}(\theta_1)+
  \widetilde{M}_{Q_2}(\theta_2)\otimes\widetilde{A}_{Q_2}(\theta_1). \nonumber
\end{eqnarray}
The tensor product preserves block structuring; that is, $\widetilde{A}_{2}(\theta_1,\theta_2)$ is a $4\times 4$ matrix, ordered as mesh nodes, $x$-edge midpoints, $y$-edge midpoints, and cell centres. Each row of $\widetilde{A}_{2}(\theta_1,\theta_2)$ reflects the connections between one  of the four types of degrees of freedom with each of these four types. Similarly, there are four types of stencils for $(\partial_{x})_h$ and $(\partial_{y})_h$.

 The stencils and the symbols of $(\partial_{x})_h$ for the nodal, $x$-edge, $y$-edge, and cell-centre degrees of freedom are
\begin{eqnarray*}
  B_{N}&=&\frac{h}{18}\begin{bmatrix}
   0 & 0 & 0\\
   -1 & 0 &1\\
   0 & 0 & 0
   \end{bmatrix},\,\,
  \widetilde{B}_{N}(\theta_1,\theta_2)=\frac{ih}{9}\sin\theta_1,\\
 B_{X}&=&\frac{h}{18}\begin{bmatrix}
   0  & 0\\
   -4  & 4\\
   0   & 0
   \end{bmatrix},\,\,\,\,\quad
  \widetilde{B}_{X}(\theta_1,\theta_2)=\frac{2ih}{9}\sin\frac{\theta_1}{2},\\
  B_{Y}&=&\frac{h}{18}\begin{bmatrix}
   -1 & 0 & 1\\
   -1 & 0 &1\\
   \end{bmatrix},\,\,
  \widetilde{B}_{Y}(\theta_1,\theta_2)=\frac{2ih}{9}\sin\theta_1\cos\frac{\theta_2}{2},\\
  B_{C}&=&\frac{h}{18}\begin{bmatrix}
   -4 & 4\\
   -4 & 4
   \end{bmatrix},\,\,\,\,\quad
  \widetilde{B}_{C}(\theta_1,\theta_2)=\frac{8ih}{9}\sin\frac{\theta_1}{2}\cos\frac{\theta_2}{2},
\end{eqnarray*}
respectively. Denote $\widetilde{B}_{Q_2,x}(\theta_1,\theta_2)^{T} =[\widetilde {B}_{N};\widetilde{ B}_{X};\widetilde{ B}_{Y};\widetilde{ B}_{C}]$.

Similarly to $\widetilde{B}_{Q_2,x}(\theta_1,\theta_2)^T$, the symbol of the stencil of $(\partial_{y})_h$ can be written as
\begin{equation*}
\widetilde{B}_{Q_2,y}(\theta_1,\theta_2)^{T} =
 [\widetilde {B}_{N}(\theta_2,\theta_1);\widetilde {B}_{Y}(\theta_2,\theta_1);\widetilde{ B}_{X}(\theta_2,\theta_1);
 \widetilde{ B}_{C}(\theta_2,\theta_1)].
 \end{equation*}
Thus, the Fourier representation of the $Q_2-Q_1$ finite-element discretization of the Stokes equations can be written as
 \begin{equation}\label{Full-symbol}
   \widetilde{\mathcal{L}}_{h}(\theta_1,\theta_2) =\begin{pmatrix}
      \widetilde{A}_{2}(\theta_1,\theta_2) & 0 &  \widetilde {B}_{Q_2,x}(\theta_1,\theta_2)^{T}\\
      0 & \widetilde{A}_{2}(\theta_1,\theta_2) & \widetilde {B}_{Q_2,y}(\theta_1,\theta_2)^{T} \\
      \widetilde {B}_{Q_2,x}(\theta_1,\theta_2)  & \widetilde {B}_{Q_2,y}(\theta_1,\theta_2) &0
    \end{pmatrix}.
  \end{equation}
Note that the Fourier symbol for the $Q_2-Q_1$ discretization is a $9\times 9$ matrix, and that the LFA smoothing factor for the $Q_2$ approximation generally fails to predict the true two-grid convergence factor \cite{HM2018LFALaplace,MR2840198}. The same behavior is seen for the relaxation schemes considered here. Therefore, we do not present smoothing factor analysis for this case and only optimize two-grid LFA predictions numerically.

\section{Relaxation for  $Q_1-Q_1$ discretizations}\label{LFA-relaxation-FEM}
\subsection{DWJ relaxation}

Distributive GS (DGS) relaxation \cite{MR558216,oosterlee2006multigrid} is well known to be highly efficient for the MAC finite-difference discretization \cite{MR1807961},  and other discretizations \cite{MR3427891,chen2015multigrid}. Its sequential nature is often seen as a significant drawback. However, Distributive weighted Jacobi (DWJ) relaxation was recently shown to achieve  good performance for the MAC discretization \cite{NLA2147}. Thus, we consider DWJ relaxation for the finite-element discretizations considered here. The discretized distribution operator can be represented by the preconditioner
 \begin{equation*}
   \mathcal{P} =\begin{pmatrix}
      I_{h} & 0 &  (\partial_{x})_{h}\\
      0 & I_{h} & (\partial_{y})_{h} \\
      0  &0  & \Delta_{h}
    \end{pmatrix}.
  \end{equation*}
Then, we apply blockwise weighted-Jacobi relaxation to the distributed operator
   \begin{equation}\label{DWJ-system-FEM}
     \mathcal{L}\mathcal{P}\approx \mathcal{L}^{*} =\begin{pmatrix}
      -\Delta_{h} & 0 &  0\\
      0 & -\Delta_{h} & 0 \\
      -(\partial_{x})_{h}  & -(\partial_{y})_{h}  & -(\partial_{x})_{h}^2-(\partial_{y})_{h}^2+L^{3,3}\Delta_{h}
    \end{pmatrix},
  \end{equation}
where we note that the operators $(\partial_x)_h^2$ and $(\partial_y)_h^2$ are formed by taking products of the discrete derivative operators and, thus, do not satisfy the identity $(\partial_x)_h^2+(\partial_y)_h^2=\Delta_h$.

The discrete matrix form of $\mathcal{P}$ is
\begin{equation*}\label{P-Precondtion}
   \mathcal{P}=  \begin{pmatrix}
      I & B^{T}\\
      0 & -A_p\\
    \end{pmatrix},
\end{equation*}
where $A_p$ is the Laplacian operator discretized at the pressure points. For standard distributive weighted-Jacobi relaxation (with weights $\alpha_1,\alpha_2$), we need to solve a system of the form
\begin{equation}\label{DWJ-Precondition-FEM}
   M_{D}\delta \hat{x}=  \begin{pmatrix}
      \alpha_1 {\rm diag}(A) & 0\\
      B & \alpha_2 h^2 I\\
    \end{pmatrix}
    \begin{pmatrix} \delta \mathcal{\hat{U}} \\ \delta \hat{p}\end{pmatrix}
  =\begin{pmatrix} r_{\mathcal{U}} \\ r_{p}\end{pmatrix},
\end{equation}
then distribute the updates as $\delta x=\mathcal{P}\delta \hat{x}$.  We use $h^2$ in the $(2,2)$ block of (\ref{DWJ-Precondition-FEM}), because the diagonal entries of the $(2,2)$ block will be of the form of a constant times $h^2$ (up to boundary conditions), for both stabilization terms.
The error propagation operator for the scheme is, then, $I-\omega\mathcal{P}M_{D}^{-1}\mathcal{L}$.

The symbol of the blockwise weighted-Jacobi  operator, $M_D$, is
  \begin{equation*}
   \widetilde{M}_{D}(\theta_1,\theta_2) =\begin{pmatrix}
       \frac{8}{3}\alpha_1 & 0 & 0  \\
      0 &  \frac{8}{3}\alpha_1  & 0 \\
      -b_1   & -b_2 & h^2\alpha_2
    \end{pmatrix}.
\end{equation*}
By standard calculation, the eigenvalues of the error-propagation symbol, $\mathcal{\widetilde{S}}_{D}(\alpha_1,\alpha_2, \omega,\boldsymbol{\theta})=I- \omega\widetilde{\mathcal{P}} \widetilde {M}_{D}^{-1}\widetilde{\mathcal{L}}$, are
\begin{equation}\label{eig3-DWJ}
  1-\frac{\omega}{\alpha_1}y_1,\,\,\, 1-\frac{\omega}{\alpha_1}y_1, 1-\frac{\omega}{\alpha_2}y_2,
\end{equation}
 where $y_1=\frac{3a}{8}$ and $y_2=\frac{-b_1^2-b_2^2+ac}{h^2}.$

Noting that $y_1=\frac{3a}{8}$ is very simple, we first consider a lower bound on the optimal LFA smoothing factor corresponding to $y_1$.
 \begin{lemma}\label{common-eig}
 \begin{equation*}
  \mu^{*}:= \min_{(\alpha_1,\omega)}\max_{\boldsymbol{\theta}\in T^{{\rm high}}}\bigg\{\big|1-\frac{ \omega}{\alpha_1}y_1\big|\bigg\}=\frac{1}{3},
 \end{equation*}
 and this value is achieved if and only if $\frac{\omega}{\alpha_1}=\frac{8}{9}$.
 \end{lemma}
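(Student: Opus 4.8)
\noindent The plan is to reduce the stated two-parameter minimization to a one-parameter minimax problem of the type familiar from the analysis of weighted Jacobi relaxation. The crucial structural observation is that $1-\frac{\omega}{\alpha_1}y_1$ depends on $\omega$ and $\alpha_1$ only through the single ratio $\gamma:=\frac{\omega}{\alpha_1}$. I would therefore rewrite
\begin{equation*}
\mu^{*}=\min_{\gamma}\max_{\boldsymbol{\theta}\in T^{{\rm high}}}\big|1-\gamma\,y_1(\boldsymbol{\theta})\big|,
\end{equation*}
and note that, since $y_1=\frac{3a}{8}>0$, any choice $\gamma\le 0$ forces every term to be at least $1$, so the optimum is attained at some $\gamma>0$.

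The second step is to determine the exact range of $y_1=\frac{3a}{8}$ over $T^{{\rm high}}$. Substituting the symbol $a=\widetilde{A}_{Q_1}$ and writing $u=\cos\theta_1$, $v=\cos\theta_2$ gives
\begin{equation*}
y_1=1-\tfrac{1}{4}\big(u+v+2uv\big).
\end{equation*}
I would then extremize $f(u,v)=u+v+2uv$ over the image of $T^{{\rm high}}$ in the $(u,v)$ square. The key point is that $\theta_j$ is a high frequency exactly when $\cos\theta_j\le 0$, so $T^{{\rm high}}$ maps onto $[-1,1]^2$ with the open low-frequency quadrant $\{u>0,\,v>0\}$ removed. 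A short check of edges and corners (the only interior critical point, $(-\tfrac12,-\tfrac12)$, is a saddle) shows that $f$ attains its maximum $f_{\max}=1$ on the boundary segments $\{u=0\}\cup\{v=0\}$, e.g.\ at $(u,v)=(1,0)$, corresponding to $\boldsymbol{\theta}=(0,\tfrac{\pi}{2})$, and its minimum $f_{\min}=-2$ at the corners $(1,-1)$ and $(-1,1)$. Consequently $y_1$ ranges over $[y_{\min},y_{\max}]=[\tfrac34,\tfrac32]$.

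With this interval in hand, I would apply the standard weighted-Jacobi balancing argument. Because $1-\gamma y_1$ is affine in $y_1$, for each fixed $\gamma>0$ we have $\max_{\boldsymbol{\theta}}|1-\gamma y_1|=\max\{|1-\gamma y_{\min}|,\,|1-\gamma y_{\max}|\}$, a unimodal function of $\gamma$ whose minimum occurs where the two branches coincide with opposite signs, $1-\gamma y_{\min}=-(1-\gamma y_{\max})$. This yields the optimal scaling $\gamma=\frac{2}{y_{\min}+y_{\max}}=\frac{8}{9}$ and the optimal value
\begin{equation*}
\mu^{*}=\frac{y_{\max}-y_{\min}}{y_{\max}+y_{\min}}=\frac{\tfrac34}{\tfrac94}=\frac{1}{3},
\end{equation*}
attained if and only if $\frac{\omega}{\alpha_1}=\gamma=\frac{8}{9}$, as claimed.

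The main obstacle is the second step, namely pinning down $y_{\min}=\tfrac34$ correctly. Over the full square $[-1,1]^2$ the maximum of $f$ would be $4$, at $(1,1)$, but that corner corresponds to the purely low-frequency mode $\boldsymbol{\theta}=(0,0)$ and must be excluded; the true high-frequency maximum of $f$ is the much smaller value $1$, realized on the boundary of the removed quadrant. Getting this restriction to $T^{{\rm high}}$ right — rather than optimizing over all of $[-1,1]^2$ — is exactly what makes the value $\tfrac13$, and the accompanying weight $\tfrac{8}{9}$, come out correctly.
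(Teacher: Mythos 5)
Your proof is correct and follows essentially the same route as the paper: identify the range $y_1\in[\tfrac34,\tfrac32]$ over $T^{\rm high}$ and apply the standard equioscillation/balancing argument in the single ratio $\gamma=\omega/\alpha_1$, giving $\gamma=2/(y_{\min}+y_{\max})=\tfrac89$ and $\mu^*=(y_{\max}-y_{\min})/(y_{\max}+y_{\min})=\tfrac13$. You simply make explicit the extremization over the high-frequency region that the paper dispatches with ``it is easy to check,'' and you state the optimal-value formula correctly (the paper's displayed ratio has numerator and denominator transposed, an evident typo).
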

 \begin{proof}
 It is easy to check that $a= \frac{2(4-\cos\theta_1-\cos\theta_2-2\cos\theta_1\cos\theta_2)}{3}\in [2,4]$ for $\boldsymbol{\theta}\in T^{{\rm high}}$.
  The minimum of $y_1$ is $y_{1,\rm{min}}=\frac{3}{4}$ with $(\cos\theta_1,\cos\theta_2)=(0,1)$ or $(1,0)$ and the maximum is
   $y_{1,\rm{max}}=\frac{3}{2}$ with $(\cos\theta_1,\cos\theta_2)=(1,-1)$ or $(-1,1)$. Thus,
   $\mu^{*}=\frac{y_{1,\rm{max}}+y_{1,\rm{min}}}{y_{1,\rm{max}}-y_{1,\rm{min}}}=\frac{1}{3}$ under the condition
   $\frac{\omega}{\alpha_1}=\frac{2}{y_{1,\rm{min}}+y_{1,\rm{max}}}= \frac{8}{9}$.
 \end{proof}
  \begin{remark}
   The optimal smoothing factor for damped Jacobi relaxation for the $Q_1$ finite-element discretization of
   the Laplacian is $\frac{1}{3}$ with $ \frac{\omega}{\alpha}=\frac{8}{9}$. Thus, this offers an intuitive lower bound on the possible performance of block relaxation schemes that include this as a piece of the overall relaxation.
 \end{remark}

 From (\ref{eig3-DWJ}), we see that the only difference between the eigenvalues of DWJ relaxation for the Poisson-stabilized and projection stabilized methods is in the third eigenvalue, which depends on $y_2$ and, consequently, on the stabilization term.

\subsubsection{Poisson-stabilized discretization with DWJ relaxation}
For the Poisson-stabilized case, $y_2=\frac{-b_1^2-b_2^2+ac}{h^2}$ with $c=\beta \alpha h^2$ and $\beta=\frac{1}{24}$. By standard calculation, $y_{2,{\rm{min}}}=\frac{8}{27}$, with $\big(\cos\theta_1,\cos\theta_2\big)=(-1,-1)$, and  $y_{2,\rm{max}}=\frac{64}{51}$ with
 $\big(\cos\theta_1,\cos\theta_2\big)=(\frac{8}{17},0)$ or $(0,\frac{8}{17})$ .

\begin{theorem}\label{optimal-PoSD-DWJ}
  The optimal smoothing factor for the Poisson-stabilized discretization with DWJ relaxation  is $\frac{55}{89}$, that is,
   \begin{equation*}
   \mu_{{\rm opt}}=\displaystyle \min_{(\alpha_{1}, \omega,\alpha_2)}\max_{ \boldsymbol{\theta}\in T^{{\rm high}}}\bigg\{\big|\lambda(\mathcal{\widetilde{S}}_D(\alpha_{1}, \alpha_2,\omega, \boldsymbol{\theta}))\big|\bigg\}=\frac{55}{89}\approx 0.618,
\end{equation*}
and is achieved if and only if
\begin{equation}\label{beta-DWJ-Parameter-domain}
   \frac{\omega}{\alpha_2}=\frac{459}{356},\,\,\frac{136}{267}\leq\frac{\omega}{\alpha_1}\leq\frac{96}{89}.
\end{equation}
\end{theorem}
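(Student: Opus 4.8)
The plan is to exploit the fact that the three eigenvalues in (\ref{eig3-DWJ}) depend on the relaxation parameters only through the two ratios $r_1:=\omega/\alpha_1$ and $r_2:=\omega/\alpha_2$, and that these ratios can be prescribed independently: given target values $r_1,r_2>0$, pick any $\omega>0$ and set $\alpha_i=\omega/r_i$. Writing $g_i(r):=\max_{\boldsymbol{\theta}\in T^{\rm high}}|1-r\,y_i|$, the smoothing factor is $\mu_{\rm loc}=\max\{g_1(r_1),g_2(r_2)\}$, because $\max_{\boldsymbol{\theta}}$ commutes with the finite maximum over the three eigenvalues (two of which share the factor $g_1$). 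Since the two arguments $r_1,r_2$ are independent, the joint minimization separates, and I would record the elementary identity
\begin{equation*}
\mu_{\rm opt}=\min_{r_1,r_2}\max\{g_1(r_1),g_2(r_2)\}=\max\Big\{\min_{r_1}g_1(r_1),\ \min_{r_2}g_2(r_2)\Big\},
\end{equation*}
whose one-line justification is that the outer maximum is bounded below by the larger of the two inner minima, and this bound is met by minimizing each piece separately.

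Each inner minimization is the classical weighted-Jacobi problem. For fixed $r>0$ the map $y\mapsto|1-ry|$ is convex, so its maximum over the interval $[y_{i,\min},y_{i,\max}]$ (which contains the image of $y_i$ and whose endpoints are attained) equals the larger endpoint value, giving $g_i(r)=\max\{|1-r\,y_{i,\min}|,\,|1-r\,y_{i,\max}|\}$. Balancing the two endpoints via $1-r\,y_{i,\min}=-(1-r\,y_{i,\max})$ yields the unique minimizer $r=\frac{2}{y_{i,\min}+y_{i,\max}}$ and optimal value $\frac{y_{i,\max}-y_{i,\min}}{y_{i,\max}+y_{i,\min}}$. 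For the doubled eigenvalue, $y_1\in[\frac{3}{4},\frac{3}{2}]$ by Lemma \ref{common-eig}, so $\min_{r_1}g_1=\frac{1}{3}$ at $r_1=\frac{8}{9}$. For the third eigenvalue, the stated endpoint values $y_{2,\min}=\frac{8}{27}$ and $y_{2,\max}=\frac{64}{51}$ give $\min_{r_2}g_2=\frac{55}{89}$, attained uniquely at $r_2=\frac{2}{8/27+64/51}=\frac{459}{356}$.

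Combining the pieces, $\mu_{\rm opt}=\max\{\frac{1}{3},\frac{55}{89}\}=\frac{55}{89}$ since $\frac{55}{89}\approx0.618>\frac{1}{3}$. To pin down the attaining set, note the $y_2$-piece is strictly minimized, so it forces $\frac{\omega}{\alpha_2}=\frac{459}{356}$ exactly, while the $y_1$-piece need only satisfy $g_1(r_1)\le\frac{55}{89}$. I would obtain the admissible range of $r_1=\frac{\omega}{\alpha_1}$ by solving the two endpoint inequalities $|1-\frac{3}{4}r_1|\le\frac{55}{89}$ and $|1-\frac{3}{2}r_1|\le\frac{55}{89}$ and intersecting, which gives $\frac{136}{267}\le r_1\le\frac{96}{89}$, exactly the condition (\ref{beta-DWJ-Parameter-domain}) (and one checks the unconstrained minimizer $\frac{8}{9}$ lies inside this interval, as it must). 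The only genuinely delicate points are the decoupling step — justifying that the joint min--max over $(\alpha_1,\alpha_2,\omega)$ collapses to two independent scalar problems — and the convexity reduction of $\max_{\boldsymbol{\theta}}$ to endpoint evaluation; once these are in place, the remaining work (evaluating the endpoint values of $y_1,y_2$ and solving the two absolute-value inequalities) is routine arithmetic.
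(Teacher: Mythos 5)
Your proposal is correct and follows essentially the same route as the paper's proof: minimize $\max_{\boldsymbol{\theta}}|1-\tfrac{\omega}{\alpha_2}y_2|$ by balancing the endpoints of $[y_{2,\min},y_{2,\max}]=[\tfrac{8}{27},\tfrac{64}{51}]$ to get $\tfrac{55}{89}$ at $\tfrac{\omega}{\alpha_2}=\tfrac{459}{356}$, note $\tfrac{55}{89}>\tfrac{1}{3}$, and then solve $|1-\tfrac{\omega}{\alpha_1}y_1|\le\tfrac{55}{89}$ for $y_1\in[\tfrac{3}{4},\tfrac{3}{2}]$ to obtain $\tfrac{136}{267}\le\tfrac{\omega}{\alpha_1}\le\tfrac{96}{89}$. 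Your explicit justification of the decoupling into two independent scalar problems via the ratios $\omega/\alpha_1$ and $\omega/\alpha_2$, and of the convexity reduction to endpoint evaluation, merely makes precise what the paper leaves implicit.
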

\begin{proof}
$\displaystyle \min_{(\alpha_2,\omega)}\max_{\boldsymbol{\theta}\in T^{{\rm high}}}\bigg\{\big|1-\frac{ \omega}{\alpha_2}y_2\big|\bigg\}= \frac{y_{2,\rm{max}}-y_{2,\rm{min}}}{y_{2,\rm{max}}+y_{2,\rm{min}}}=\frac{55}{89}$ with the condition that $\frac{\omega}{\alpha_2}=\frac{2}{y_{2,\rm{max}}+y_{2,\rm{min}}}=\frac{459}{356}$. Because $\frac{55}{89}>\frac{1}{3}$, we need to require $|1-\frac{\omega}{\alpha_1}y_1|\leq \frac{55}{89}$ for all $y_1$ to achieve this factor. It follows that  $\frac{136}{267}\leq \frac{\omega}{\alpha_1}\leq\frac{96}{89}$.
\end{proof}

\subsubsection{Projection stabilized discretization with DWJ relaxation}
For the projection stabilized discretization, $y_2$ depends on $c_2$ given in (\ref{c2-form}), and standard calculation gives $y_{2,{\rm{min}}}=\frac{8}{27}$ with  $\big(\cos\theta_1,\cos\theta_2\big)=(-1,-1)$ and $y_{2,\rm{max}}=\frac{3}{2}$ with $(\cos\theta_1,\cos\theta_2)=(-\frac{1}{2},1)$ or $(1,-\frac{1}{2})$.
\begin{theorem}\label{optimal-PrSD-DWJ}
  The optimal smoothing factor for the projection stabilized  discretization  with DWJ relaxation  is $\frac{65}{97}$,
   that is,
\begin{equation*}
   \mu_{{\rm opt}}=\displaystyle \min_{(\alpha_{1}, \omega,\alpha_2)}\max_{ \boldsymbol{\theta}\in T^{{\rm high}}}\bigg\{\big|\lambda(\mathcal{\widetilde{S}}_D(\alpha_{1}, \alpha_2,\omega,\boldsymbol{\theta}))\big|\bigg\}=\frac{65}{97}\approx 0.670,
\end{equation*}
and is achieved if and only if
\begin{equation}\label{D-DWJ-Parameter-domain}
  \frac{\omega}{\alpha_2}=\frac{108}{97},\,\,\frac{128}{291}\leq\frac{\omega}{\alpha_1}\leq\frac{108}{97}.
\end{equation}
\end{theorem}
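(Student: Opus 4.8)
The plan is to exploit the fact that the three eigenvalues in (\ref{eig3-DWJ}) decouple into two groups governed by the \emph{independent} ratios $\omega/\alpha_1$ and $\omega/\alpha_2$, exactly as in the proof of Theorem~\ref{optimal-PoSD-DWJ}. Since $\alpha_1$ and $\alpha_2$ are separate parameters, the maximum modulus $\max\{|1-\tfrac{\omega}{\alpha_1}y_1|,\,|1-\tfrac{\omega}{\alpha_2}y_2|\}$ can be minimized blockwise: first I would minimize the $y_2$-contribution on its own, and then determine what constraint this forces on $\omega/\alpha_1$.

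For the $y_2$-eigenvalue I would invoke the standard optimal-damping balance: for a positive quantity ranging over $[y_{2,\min},y_{2,\max}]$, the choice $\tfrac{\omega}{\alpha_2}=\tfrac{2}{y_{2,\min}+y_{2,\max}}$ equalizes $|1-\tfrac{\omega}{\alpha_2}y_{2,\min}|$ and $|1-\tfrac{\omega}{\alpha_2}y_{2,\max}|$ and attains the minimal value $\tfrac{y_{2,\max}-y_{2,\min}}{y_{2,\max}+y_{2,\min}}$. Substituting the extremal values $y_{2,\min}=\tfrac{8}{27}$ and $y_{2,\max}=\tfrac{3}{2}$ (already identified by the calculation immediately preceding the statement) gives the value $\tfrac{65}{97}$ at $\tfrac{\omega}{\alpha_2}=\tfrac{108}{97}$.

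Next I would compare this with the best achievable value of the repeated $y_1$-eigenvalue, which Lemma~\ref{common-eig} fixes at $\tfrac{1}{3}$. Because $\tfrac{65}{97}>\tfrac{1}{3}$, the $y_2$-block is the binding constraint, so the overall optimum equals $\tfrac{65}{97}$ and the ratio $\omega/\alpha_1$ need not be tuned to its own optimum $\tfrac{8}{9}$; it only needs to be kept small enough that $|1-\tfrac{\omega}{\alpha_1}y_1|$ does not exceed $\tfrac{65}{97}$. Writing $\eta=\tfrac{\omega}{\alpha_1}>0$ and using $y_1\in[\tfrac{3}{4},\tfrac{3}{2}]$ (from the bounds on $a$ in Lemma~\ref{common-eig}), I would impose $-\tfrac{65}{97}\le 1-\eta y_1\le \tfrac{65}{97}$ uniformly over this interval: the upper inequality is tightest at $y_1=\tfrac{3}{4}$ and yields $\eta\ge\tfrac{128}{291}$, while the lower inequality is tightest at $y_1=\tfrac{3}{2}$ and yields $\eta\le\tfrac{108}{97}$, reproducing (\ref{D-DWJ-Parameter-domain}).

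I do not expect a genuine obstacle here, since the argument mirrors that of Theorem~\ref{optimal-PoSD-DWJ} verbatim with different constants; the only nontrivial ingredient is locating the extrema of $y_2$ over $T^{\rm high}$, but that two-dimensional optimization is supplied by the ``standard calculation'' stated just before the theorem, leaving the proof itself to reduce to the one-dimensional balancing and interval arguments above.
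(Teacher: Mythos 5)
Your proposal is correct and follows essentially the same route as the paper's own proof: balance the $y_2$-eigenvalue via $\tfrac{\omega}{\alpha_2}=\tfrac{2}{y_{2,\min}+y_{2,\max}}=\tfrac{108}{97}$ to obtain $\tfrac{y_{2,\max}-y_{2,\min}}{y_{2,\max}+y_{2,\min}}=\tfrac{65}{97}$, then, since $\tfrac{65}{97}>\tfrac13$, require only $|1-\tfrac{\omega}{\alpha_1}y_1|\le\tfrac{65}{97}$ over $y_1\in[\tfrac34,\tfrac32]$, which gives the interval $\tfrac{128}{291}\le\tfrac{\omega}{\alpha_1}\le\tfrac{108}{97}$. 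The only difference is that you spell out the interval computation and the decoupling of the two ratios more explicitly than the paper does.
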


\begin{proof}
$\displaystyle \min_{(\alpha_2,\omega)}\max_{\boldsymbol{\theta}\in T^{{\rm high}}}\bigg\{\big|1-\frac{ \omega}{\alpha_2}y_2\big|\bigg\}= \frac{y_{2,\rm{max}}-y_{2,\rm{min}}}{y_{2,\rm{max}}+y_{2,\rm{min}}}=\frac{65}{97}$  with the condition that $\frac{\omega}{\alpha_2}=\frac{2}{y_{2,\rm{max}}+y_{2,\rm{min}}}=\frac{108}{97}$. Since $\frac{65}{97}>\frac{1}{3}$, we need to require $|1-\frac{\omega}{\alpha_1}y_1|\leq \frac{65}{97}$ for all $y_1$ to achieve this factor, which leads to  $\frac{128}{291}\leq \frac{\omega}{\alpha_1}\leq\frac{108}{97}$.
\end{proof}

Comparing the Poisson-stabilized and projection stabilized discretizations using DWJ, we see that the optimal LFA smoothing factor for the Poisson-stabilized discretization slightly outperforms that of the projection stabilized discretization. In both cases, a stronger relaxation on the $(3,3)$ block of (\ref{DWJ-system-FEM}) would be needed in order to improve performance to match the lower bound on the convergence factor of $\frac{1}{3}$. A natural approach is to using more iterations to solve the pressure equation in DWJ. We explore the LFA predictions for this case in the following.

\subsubsection{Stabilized discretization with 2 sweeps of Jacobi for DWJ relaxation}
Denote the $(3,3)$ block  of (\ref{DWJ-system-FEM}) as $G$. We consider applying two sweeps of weighted-Jacobi relaxation with equal weights, $\omega_J$, on the pressure equation. As before, we note that $G$ has a constant diagonal entry proportional to $h^2$, so we write weighted Jacobi relaxation on $G$ as $I-G_J^{-1}G$ for $G_J=\frac{h^2}{\omega_J}I$.   Thus, we can represent this relaxation scheme as solving
\begin{equation}\label{DWJ-Precondition-2Sweeps}
   M_{D,J}\delta \hat{x}=  \begin{pmatrix}
      \alpha_1 {\rm diag}(A) & 0\\
      B & \hat{G}\\
    \end{pmatrix}
    \begin{pmatrix} \delta \mathcal{\hat{U}} \\ \delta \hat{p}\end{pmatrix}
  =\begin{pmatrix} r_{\mathcal{U}} \\ r_{p}\end{pmatrix},
\end{equation}
where $\hat{G}=\Big(2G_J^{-1}-G_J^{-1}GG_J^{-1}\Big)^{-1}$.
The symbol of $M_{D,J}$, is
  \begin{equation*}
   \widetilde{M}_{D,J}(\theta_1,\theta_2) =\begin{pmatrix}
       \frac{8}{3}\alpha_1 & 0 & 0  \\
      0 &  \frac{8}{3}\alpha_1  & 0 \\
      -b_1   & -b_2 & \frac{h^2}{2\omega_J-\omega_J^2y_2}
    \end{pmatrix}.
\end{equation*}
By standard calculation, the eigenvalues of the error-propagation symbol, $\mathcal{\widetilde{S}}_{D,J}(\alpha_1,\omega_J, \omega,\boldsymbol{\theta})=I- \omega\widetilde{\mathcal{P}} \widetilde {M}_{D,J}^{-1}\widetilde{\mathcal{L}}$, are
\begin{equation}\label{eig3-DWJ-2Jacobi}
  1-\frac{\omega}{\alpha_1}y_1,\,\,\, 1-\frac{\omega}{\alpha_1}y_1, 1- \omega y_3,
\end{equation}
where $y_3=\omega_Jy_2(2-\omega_Jy_2)$, where the symbol of $G$ is $h^2y_2$, with $y_2$ defined as in (\ref{eig3-DWJ}).  Note that $\mathcal{\widetilde{S}}_{D,J}$ has the same eigenvalue, $1-\frac{\omega}{\alpha_1}y_1$ as that of $\mathcal{\widetilde{S}}_{D}$. A natural question is whether  $\displaystyle\min_{(\alpha_1,\omega_J,\omega)}\max_{\boldsymbol{\theta}\in T^{{\rm high}}}\bigg\{\big|1-\omega y_3\big|\bigg\}=\frac{1}{3}$, which is shown in the following theorems.
 \begin{theorem}\label{optimal-PoSD-DWJ-2Jacobi}
  The optimal smoothing factor for the Poisson-stabilized  discretization  with 2 sweeps of Jacobi for DWJ relaxation  is $\frac{1}{3}$,
   that is,
\begin{equation*}
   \mu_{{\rm opt}}=\displaystyle \min_{(\alpha_{1}, \omega_J,\omega)}\max_{ \boldsymbol{\theta}\in T^{{\rm high}}}\bigg\{\big|\lambda(\mathcal{\widetilde{S}}_{D,J}(\alpha_{1}, \omega_J,\omega,\boldsymbol{\theta}))\big|\bigg\}=\frac{1}{3},
\end{equation*}
and is achieved if and only if $\frac{\omega}{\alpha_1}=\frac{8}{9}$ and either
\begin{eqnarray*}
   \frac{459}{356}\leq &\omega_J& \leq \frac{51}{64}(1+\frac{\sqrt{2}}{2}),\\
  \frac{2}{3\Big(\frac{64}{51}\omega_J(2-\frac{64}{51}\omega_J)\Big)}\leq &\omega& \leq \frac{4}{3},
\end{eqnarray*}
or
\begin{eqnarray*}
\frac{27}{8}(1-\frac{\sqrt{2}}{2}) \leq&\omega_J&\leq \frac{459}{356},\\
\frac{2}{3\Big(\frac{8}{27}\omega_J(2-\frac{8}{27}\omega_J)\Big)}\leq &\omega&\leq \frac{4}{3}.
\end{eqnarray*}
\end{theorem}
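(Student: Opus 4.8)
The plan is to decouple the three-eigenvalue optimization into two essentially independent one-parameter problems, using that the first two eigenvalues in (\ref{eig3-DWJ-2Jacobi}) coincide and have already been analyzed. First I would invoke Lemma \ref{common-eig}: since two of the three eigenvalues equal $1-\frac{\omega}{\alpha_1}y_1$, the smoothing factor is bounded below by $\frac{1}{3}$, with this contribution equal to $\frac{1}{3}$ exactly when $\frac{\omega}{\alpha_1}=\frac{8}{9}$. Hence $\mu_{\rm opt}\geq\frac{1}{3}$, and attaining equality forces $\frac{\omega}{\alpha_1}=\frac{8}{9}$ together with the requirement that the third eigenvalue satisfy $\max_{\boldsymbol{\theta}\in T^{\rm high}}|1-\omega y_3|\leq\frac{1}{3}$, i.e.\ $\frac{2}{3}\leq\omega y_3\leq\frac{4}{3}$ for every $\boldsymbol{\theta}$.

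The heart of the argument is to track the range of $y_3=\omega_J y_2(2-\omega_J y_2)$ as $\boldsymbol{\theta}$ varies. Writing $g(s)=s(2-s)$, I would use that $y_2$ ranges continuously over $[\frac{8}{27},\frac{64}{51}]$ (the stated $y_{2,\min}$, $y_{2,\max}$), so that $t:=\omega_J y_2$ sweeps $[\frac{8}{27}\omega_J,\frac{64}{51}\omega_J]$. Since $g$ is concave with unique maximum $g(1)=1$, the key observation is that $\frac{1}{\omega_J}$ lies inside $[\frac{8}{27},\frac{64}{51}]$ precisely when $\omega_J\in[\frac{51}{64},\frac{27}{8}]$, which contains the entire parameter window of interest; in that regime the peak is attained, $y_{3,\max}=1$, and the upper constraint $\omega y_{3,\max}\leq\frac{4}{3}$ collapses to $\omega\leq\frac{4}{3}$ uniformly across both cases.

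For the lower constraint I would locate $y_{3,\min}$, which by concavity of $g$ sits at whichever endpoint of $[\frac{8}{27}\omega_J,\frac{64}{51}\omega_J]$ is farther from $1$. Comparing the two distances reduces to the sign of $t_{\min}+t_{\max}-2=\omega_J\big(\frac{8}{27}+\frac{64}{51}\big)-2$, whose unique root is exactly $\omega_J=\frac{459}{356}$; this is the source of the two cases. For $\omega_J\geq\frac{459}{356}$ the farther endpoint is $t_{\max}$, giving $y_{3,\min}=\frac{64}{51}\omega_J\big(2-\frac{64}{51}\omega_J\big)$, whereas for $\omega_J\leq\frac{459}{356}$ it is $t_{\min}$, giving $y_{3,\min}=\frac{8}{27}\omega_J\big(2-\frac{8}{27}\omega_J\big)$. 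In either case the admissible weights are exactly those with $\frac{2}{3y_{3,\min}}\leq\omega\leq\frac{4}{3}$, which reproduces the displayed bounds on $\omega$.

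Finally, to pin down the endpoints of the $\omega_J$-intervals, I would impose that this $\omega$-window be nonempty, i.e.\ $\frac{2}{3y_{3,\min}}\leq\frac{4}{3}$, equivalently $y_{3,\min}\geq\frac{1}{2}$. Solving $g(u)=u(2-u)\geq\frac{1}{2}$ gives $u\in[1-\frac{\sqrt{2}}{2},1+\frac{\sqrt{2}}{2}]$, and substituting $u=\frac{64}{51}\omega_J$ (Case 1) or $u=\frac{8}{27}\omega_J$ (Case 2) yields the outer bounds $\omega_J\leq\frac{51}{64}(1+\frac{\sqrt{2}}{2})$ and $\omega_J\geq\frac{27}{8}(1-\frac{\sqrt{2}}{2})$, respectively, while the common endpoint $\omega_J=\frac{459}{356}$ comes from the case split itself. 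The main obstacle I anticipate is the endpoint-comparison step for $y_{3,\min}$: one must argue carefully that $1$ lies strictly inside the $t$-interval (so that $y_{3,\max}=1$ and the minimum is attained at a boundary point) and correctly identify the transition at $\omega_J=\frac{459}{356}$, after which the remaining inequalities are routine quadratic manipulations.
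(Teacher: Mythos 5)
Your proposal is correct and follows essentially the same route as the paper: Lemma \ref{common-eig} pins down $\frac{\omega}{\alpha_1}=\frac{8}{9}$ and reduces the problem to $\frac{2}{3}\le\omega y_3\le\frac{4}{3}$, after which the range of $y_3=\xi(2-\xi)$ over $\xi\in[\frac{8}{27}\omega_J,\frac{64}{51}\omega_J]$ is tracked via concavity, with the case split at $t_{\min}+t_{\max}=2$ (i.e.\ $\omega_J=\frac{459}{356}$) and the outer $\omega_J$-bounds obtained from $y_{3,\min}\ge\frac{1}{2}$. The one point the paper treats more carefully is the regime $\frac{64}{51}\omega_J\le 1$, where the peak $y_{3,\max}=1$ is \emph{not} attained: the paper shows the required inequality $y_{3,\max}\le 2y_{3,\min}$ would force $\omega_J\ge\frac{373}{394}>\frac{51}{64}$, a contradiction, whereas you only assert that this regime lies outside ``the parameter window of interest,'' which leaves a small hole in the only-if direction.
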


\begin{proof}
Recall that $y_3= \omega_Jy_2(2-\omega_Jy_2): =\xi(2-\xi)$, where $\xi=\omega_Jy_2$. Let
\begin{equation}\label{third-eig-2Jacobi}
  \mu^{**} =\min_{(\alpha_1,\omega_J,\omega)}\max_{\boldsymbol{\theta}\in T^{{\rm high}}}\bigg\{\big|1-\omega y_3\big|\bigg\}.
\end{equation}
We first show that $\mu^{**}\leq\frac{1}{3}$ under some conditions on the parameters, $\omega_J$ and $\omega$. Let $y_{3,{\rm min}}$ and $y_{3,{\rm max}}$ be the maximum and minimum of $y_3(\xi)=\xi(2-\xi)$, respectively. If $\mu^{**}\leq\frac{1}{3}$, then it must be that
\begin{equation}\label{inequal-compare-eig}
\frac{2}{3y_{3,{\rm min}}}\leq \omega\leq\frac{4}{3y_{3,{\rm max}}}.
\end{equation}
Next, we need to find what $y_{3,\min}$ and $y_{3,\max}$ are. As discussed earlier, $y_2\in[\frac{8}{27},\frac{64}{51}]$. Thus, $\xi\in [\frac{8}{27}\omega_J,\frac{64}{51}\omega_J]$, where $\omega_J>0$. Note that $y_3(\xi)=\xi(2-\xi)=-(\xi-1)^2+1$ is a quadratic function with the axis of symmetric, $\xi=1$. Thus, the extreme values of $y_{3}(\xi)$ are achieved at the points $\frac{8}{27}\omega_J$, $\frac{64}{51}\omega_J$ or 1.  Based on $\frac{64}{51}\omega_J\leq 1$ and $\frac{64}{51}\omega_J\geq 1$, we consider two cases.
\begin{enumerate}
\item If $\frac{64}{51}\omega_J\leq 1$, we have
\begin{equation}\label{compare-extrem-value}
y_{3,{\rm min}} =\frac{8}{27}\omega_J\Bigg(2-\frac{8}{27}\omega_J\Bigg), \,\, y_{3,{\rm max}}=\frac{64}{51}\omega_J\Bigg(2-\frac{64}{51}\omega_J\Bigg).
\end{equation}
Note that (\ref{inequal-compare-eig}) indicates that $y_{3,\max}\leq 2 y_{3,\min}$.  Combining with (\ref{compare-extrem-value}) leads to  $\omega_J\geq \frac{373}{394}$. However, $\omega_J \leq \frac{51}{64}<\frac{373}{394}$. Thus, there is no $\omega_J$ such that $\mu^{**}\leq \frac{1}{3}$ in this case.
\item To guarantee that $|1-\omega y_3|=|1-\omega\xi(2-\xi)|<1$, we require that $0<\xi<2$.  Assume that $1\leq \frac{64}{51}\omega_J<2$. It follows that $\frac{8}{27}\omega_J<1\leq\frac{64}{51}\omega_J$. Recall that $y_3(\xi)=\xi(2-\xi)=-(\xi-1)^2+1$.
\begin{itemize}
\item If $(\frac{64}{51}\omega_J-1)\geq (1-\frac{8}{27}\omega_J)$, we have
\begin{equation}\label{conditionA}
   \frac{459}{356}\leq \omega_J< \frac{51}{32}.
\end{equation}
Then, the extreme values of $y_3(\xi)$ are
\begin{equation}\label{compare-extrem-value2}
 y_{3,{\rm min}}=\frac{64}{51}\omega_J\Bigg(2-\frac{64}{51}\omega_J\Bigg), \,\,y_{3,{\rm max}}=y_3(1)=1.
\end{equation}
Substituting (\ref{compare-extrem-value2}) in to (\ref{inequal-compare-eig}), we have
\begin{equation}\label{Final-CondiA}
\frac{2}{3\Big(\frac{64}{51}\omega_J(2-\frac{64}{51}\omega_J)\Big)}\leq \omega \leq \frac{4}{3}.
\end{equation}
To guarantee (\ref{Final-CondiA}) makes sense, in combination with (\ref{conditionA}) gives
\begin{equation}\label{resultA}
  \frac{459}{356}\leq\omega_J\leq \frac{51}{64}(1+\frac{\sqrt{2}}{2}).
\end{equation}
Recall that there is another eigenvalue, $1-\frac{\omega}{\alpha_1}y_1$, of $\mathcal{\widetilde{S}}_{D,J}$. In order to obtain $\mu_{\rm opt}=\frac{1}{3}$, we thus require
\begin{eqnarray*}
   \frac{459}{356}\leq &\omega_J& \leq \frac{51}{64}(1+\frac{\sqrt{2}}{2}),\\
  \frac{2}{3\Big(\frac{64}{51}\omega_J(2-\frac{64}{51}\omega_J)\Big)}\leq &\omega& \leq \frac{4}{3},\\
  \frac{\omega}{\alpha_1}&=&\frac{8}{9}.
\end{eqnarray*}
\item  A similar argument holds if $(\frac{64}{51}\omega_J-1)\leq (1-\frac{8}{27}\omega_J)$, leading to the second set of conditions.
\end{itemize}
\end{enumerate}
\end{proof}

Note that the set of  parameters values defined in Theorem \ref{optimal-PoSD-DWJ-2Jacobi} is not empty, with parameters $\alpha_1 = \frac{3}{2},\omega =\frac{4}{3}$ and $\omega_J=1$ in the set.

\begin{theorem}\label{optimal-PrSD-DWJ-2Jacobi}
  The optimal smoothing factor for the projection stabilized  discretization  with two sweeps of Jacobi for DWJ relaxation  is $\frac{1}{3}$,
   that is,
\begin{equation*}
   \mu_{{\rm opt}}=\displaystyle \min_{(\alpha_{1}, \omega,\alpha_2)}\max_{ \boldsymbol{\theta}\in T^{{\rm high}}}\bigg\{\big|\lambda(\mathcal{\widetilde{S}}(\alpha_{1}, \alpha_2,\omega,\boldsymbol{\theta}))\big|\bigg\}=\frac{1}{3},
\end{equation*}
and is achieved if and only if $\frac{\omega}{\alpha_1}=\frac{8}{9}$ and either
\begin{eqnarray*}
   \frac{108}{97}\leq &\omega_J& \leq \frac{2}{3}(1+\frac{\sqrt{2}}{2}),\\
  \frac{2}{3\Big(\frac{3}{2}\omega_J(2-\frac{3}{2}\omega_J)\Big)}\leq &\omega& \leq \frac{4}{3},
\end{eqnarray*}
or
\begin{eqnarray*}
\frac{27}{8}(1-\frac{\sqrt{2}}{2}) \leq&\omega_J&\leq \frac{108}{97},\\
\frac{2}{3\Big(\frac{8}{27}\omega_J(2-\frac{8}{27}\omega_J)\Big)}\leq &\omega&\leq \frac{4}{3}.
\end{eqnarray*}
\end{theorem}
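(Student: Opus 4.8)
The plan is to exploit that this statement is structurally identical to Theorem~\ref{optimal-PoSD-DWJ-2Jacobi}, with the only change being the value of $y_{2,{\rm max}}$, which is $\frac{3}{2}$ for the projection stabilized discretization rather than $\frac{64}{51}$ for the Poisson-stabilized case (both share $y_{2,{\rm min}}=\frac{8}{27}$). From (\ref{eig3-DWJ-2Jacobi}), the error-propagation symbol $\widetilde{\mathcal{S}}_{D,J}$ has the doubled eigenvalue $1-\frac{\omega}{\alpha_1}y_1$ together with the single eigenvalue $1-\omega y_3$, where $y_3=\omega_J y_2(2-\omega_J y_2)$. By Lemma~\ref{common-eig}, the contribution of the first eigenvalue is bounded below by $\frac{1}{3}$ and attains $\frac{1}{3}$ exactly when $\frac{\omega}{\alpha_1}=\frac{8}{9}$; hence it suffices to show that the $y_3$-eigenvalue can be driven to $\mu^{**}=\min\max|1-\omega y_3|=\frac{1}{3}$ under admissible parameters, which then pins $\mu_{\rm opt}=\frac{1}{3}$.

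First I would set $\xi=\omega_J y_2$, so that as $\boldsymbol{\theta}$ ranges over $T^{\rm high}$ we have $\xi\in[\frac{8}{27}\omega_J,\frac{3}{2}\omega_J]$, and write $y_3(\xi)=\xi(2-\xi)=1-(\xi-1)^2$, a downward parabola with vertex at $\xi=1$. Demanding $|1-\omega y_3|\le\frac{1}{3}$ uniformly forces $\frac{2}{3y_{3,{\rm min}}}\le\omega\le\frac{4}{3y_{3,{\rm max}}}$, and hence the consistency requirement $y_{3,{\rm max}}\le 2y_{3,{\rm min}}$. The key step is to locate $y_{3,{\rm min}}$ and $y_{3,{\rm max}}$ by comparing the interval $[\frac{8}{27}\omega_J,\frac{3}{2}\omega_J]$ against the vertex, exactly as in the proof of Theorem~\ref{optimal-PoSD-DWJ-2Jacobi}.

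The case split then proceeds as follows. If $\frac{3}{2}\omega_J\le 1$, both endpoints lie on the increasing branch, so $y_{3,{\rm min}}=y_3(\frac{8}{27}\omega_J)$ and $y_{3,{\rm max}}=y_3(\frac{3}{2}\omega_J)$; I expect $y_{3,{\rm max}}\le 2y_{3,{\rm min}}$ to force $\omega_J$ above the case constraint $\omega_J\le\frac{2}{3}$ (an explicit computation yields a lower bound near $0.87$), ruling this case out just as $\omega_J\le\frac{51}{64}$ did in the Poisson case. If instead $1\le\frac{3}{2}\omega_J<2$, the vertex lies inside the interval, so $y_{3,{\rm max}}=y_3(1)=1$ while $y_{3,{\rm min}}$ is taken at the endpoint further from $\xi=1$. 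Comparing $(\frac{3}{2}\omega_J-1)$ with $(1-\frac{8}{27}\omega_J)$ yields the threshold $\omega_J=\frac{108}{97}$ — the analogue of $\frac{459}{356}$ — which splits into the two sub-cases producing the two condition sets in the statement; in the first sub-case $y_{3,{\rm min}}=\frac{3}{2}\omega_J(2-\frac{3}{2}\omega_J)$, giving $\frac{2}{3(\frac{3}{2}\omega_J(2-\frac{3}{2}\omega_J))}\le\omega\le\frac{4}{3}$.

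Finally, the upper bound on $\omega_J$ comes from requiring this $\omega$-interval to be nonempty, i.e. $y_{3,{\rm min}}\ge\frac{1}{2}$. Writing $u=\frac{3}{2}\omega_J$ and solving $u(2-u)\ge\frac{1}{2}$ gives $u\le 1+\frac{\sqrt{2}}{2}$, hence $\omega_J\le\frac{2}{3}(1+\frac{\sqrt{2}}{2})$, matching the stated bound (with the factor $\frac{2}{3}=1/y_{2,{\rm max}}$ replacing $\frac{51}{64}$). Intersecting with the condition $\frac{\omega}{\alpha_1}=\frac{8}{9}$ from Lemma~\ref{common-eig} completes the first condition set, and the symmetric sub-case gives the second. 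The main obstacle is not conceptual — the argument is a transcription of Theorem~\ref{optimal-PoSD-DWJ-2Jacobi} — but rather the careful re-verification that each rational constant ($\frac{108}{97}$, the $\omega$-bounds, and the endpoint of the $\omega_J$-range) recomputes correctly with $y_{2,{\rm max}}=\frac{3}{2}$, and in particular confirming that the first case genuinely remains vacuous.
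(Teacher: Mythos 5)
Your proposal is correct and takes exactly the route the paper intends: the paper's own proof of this theorem is literally the one-line remark that it is "similar to that of Theorem \ref{optimal-PoSD-DWJ-2Jacobi}," and you have carried out that transcription faithfully, with all the recomputed constants ($\frac{108}{97}$ from balancing $\frac{3}{2}\omega_J-1$ against $1-\frac{8}{27}\omega_J$, the $\omega_J$ endpoints from $y_{3,\min}\geq\frac{1}{2}$, and the vacuity of the $\frac{3}{2}\omega_J\leq 1$ case since the required $\omega_J\gtrsim 0.875$ exceeds $\frac{2}{3}$) checking out.
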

 \begin{proof}
   The proof is similar to that of Theorem \ref{optimal-PoSD-DWJ-2Jacobi}.
 \end{proof}
\begin{remark}
  Theorems \ref{optimal-PoSD-DWJ-2Jacobi} and \ref{optimal-PrSD-DWJ-2Jacobi} tell us that  two sweeps of weighted-Jacobi relaxation  on the pressure equation in DWJ are required to achieve optimal performance. This is different than the case of DWJ for the MAC discretization \cite{NLA2147}, where the optimal convergence factor of $\frac{3}{5}$ is attained with one sweep of relaxation on the pressure equation.
\end{remark}
\begin{remark}
Red-black Gauss-Seidel relaxation \cite{MR1807961} is an attractive tool for parallel computation as it typically offers better relaxation properties while retaining parallelism. However, due to the added coupling of the finite-element operators considered here, four-colour or nine-colour relaxation would be needed to decouple the updates. Thus, we restrict ourselves to weighted Jacobi relaxation.
\end{remark}
\subsection{Braess-Sarazin relaxation}
\label{sec:BSR}
Although DWJ relaxation is efficient, we see clearly in the above that it ``underperforms'' in relation to weighted Jacobi relaxation for the scalar Poisson problem unless additional work is done on the pressure equation. Furthermore,  proper construction of the preconditioner, $\mathcal{P}$, is not always possible or straightforward, especially for other types of saddle-point problems. Considering these obstacles, we also analyse other block-structured relaxation schemes. Braess-Sarazin-type algorithms were originally developed as a relaxation scheme for the Stokes equations \cite{braess1997efficient}, requiring the solution of a greatly simplified but global saddle-point system.
The (exact) BSR approach was first introduced in \cite{braess1997efficient}, where it was shown that a multigrid convergence rate of $O(k^{-1})$ can be achieved, where $k$ denotes the number of smoothing steps on each level.  As a relaxation scheme for the system in (\ref{saddle-structure-FEM}), one solves a system of the form
\begin{equation}\label{Precondtion-FEM}
   M_{E}\delta_x=  \begin{pmatrix}
      \alpha D & B^{T}\\
     B & -C\\
    \end{pmatrix}
    \begin{pmatrix} \delta \mathcal{U} \\ \delta p\end{pmatrix}
  =\begin{pmatrix} r_{\mathcal{U}} \\ r_{p}\end{pmatrix},
\end{equation}
where $D$ is an approximation to $A$, the inverse of which is easy to apply, for example $I, \textrm{or}\,\,{\rm diag}(A)$. Solutions of (\ref{Precondtion-FEM}) are computed in two stages as
\begin{eqnarray}
  S\delta p&=&\frac{1}{\alpha} BD^{-1}r_{\mathcal{U}}- r_{p}, \label{schur-solution-of-precondtion}\\
  \delta \mathcal{U}&=&\frac{1}{\alpha}D^{-1}(r_{\mathcal{U}}-B^{T}\delta p),\nonumber
\end{eqnarray}
where $S=\frac{1}{\alpha}BD^{-1}B^{T}+C$, and  $\alpha>0$ is a chosen weight for $D$ to obtain a better approximation to $A$.
We consider an additional  weight, $\omega$, for the global  update, $\delta x$, to improve the effectiveness of the correction to both the velocity and pressure unknowns.

There is a significant difficulty in practical use of exact BSR because it requires an exact inversion of the approximate Schur complement, $S$, which is typically very expensive. A broader class of iterative methods for the Stokes problem is discussed in \cite{MR1810326}, which demonstrated that the same $O(k^{-1})$ performance can be achieved as with exact BSR when the pressure correction equation is not solved exactly. In practice, an approximate solve is sufficient for the Schur complement system, such as with a few sweeps of weighted Jacobi relaxation or a few multigrid cycles. In what follows,
we take $D={\rm diag}(A)$ and  analyze exact BSR; to see what convergence factor can be achieved. In numerical experiments, we then consider whether it is possible to achieve the same convergence factor using an inexact solver. Note that some studies \cite{adler2016monolithic,MR3639325,adler2016constrained} have shown the efficiency of inexact Braess-Sarazin relaxation.
The symbol of $M_{E}$ is given by
\begin{equation*}
   \widetilde{M}_{E}(\theta_1,\theta_2) =\begin{pmatrix}
       \frac{8}{3}\alpha & 0 & b_1  \\
      0 &   \frac{8}{3}\alpha & b_2 \\
      -b_1   &  -b_2 &-c
    \end{pmatrix}.
\end{equation*}
The symbol of the error-propagation matrix for weighted exact BSR is $\mathcal{\widetilde{S}}_{E}(\alpha, \omega,\boldsymbol{\theta})=I- \omega \widetilde {M}_{E}^{-1}\widetilde{\mathcal{L}}$.
A standard calculation shows that the determinant of $\widetilde{\mathcal{L}}-\lambda \widetilde{M}_{E}$ is
\begin{eqnarray}\label{3-eigs-BSR}
 \pi_{E}(\lambda;\alpha) = (1-\lambda)(a-\frac{8}{3}\alpha \lambda)\bigg[(1-\lambda)(b_1^2+b_2^2)+(\frac{8}{3}\alpha \lambda-a)c\bigg].
\end{eqnarray}
We first establish a lower bound on the LFA smoothing factor for the stabilized method with BSR.
\begin{theorem}
The optimal LFA smoothing factor for the  Poisson-stabilized and projection stabilized discretizations with exact BSR is not less than $\frac{1}{3}$.
\end{theorem}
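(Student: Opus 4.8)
The plan is to exploit the factored form of the characteristic determinant in (\ref{3-eigs-BSR}) to read off the generalized eigenvalues, and then to recognize that one of them reproduces exactly the scalar damped-Jacobi eigenvalue already bounded in Lemma \ref{common-eig}. First I would note that, whenever $\widetilde{M}_E(\boldsymbol{\theta})$ is invertible, the eigenvalues of $\widetilde{M}_E^{-1}\widetilde{\mathcal{L}}$ are precisely the roots $\lambda$ of $\det(\widetilde{\mathcal{L}}-\lambda\widetilde{M}_E)=\pi_E(\lambda;\alpha)=0$, so the eigenvalues of the error-propagation symbol $\widetilde{\mathcal{S}}_E=I-\omega\widetilde{M}_E^{-1}\widetilde{\mathcal{L}}$ are $1-\omega\lambda$. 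From the three factors of $\pi_E$ in (\ref{3-eigs-BSR}), the generalized eigenvalues are $\lambda=1$, the root $\lambda=\frac{3a}{8\alpha}$ of the linear factor $a-\frac{8}{3}\alpha\lambda$, and a third value determined by the bracketed term.

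The crucial step is the middle eigenvalue: the associated eigenvalue of $\widetilde{\mathcal{S}}_E$ is $1-\omega\frac{3a}{8\alpha}=1-\frac{\omega}{\alpha}y_1$, with $y_1=\frac{3a}{8}$ exactly as in (\ref{eig3-DWJ}). Since $\mu_{\rm loc}$ is the maximum over $T^{\rm high}$ of the modulus of all three eigenvalues, it is bounded below, for every fixed pair $(\alpha,\omega)$, by the contribution of this single mode,
\begin{equation*}
\mu_{\rm loc}(\alpha,\omega)\geq \max_{\boldsymbol{\theta}\in T^{\rm high}}\Big|1-\tfrac{\omega}{\alpha}y_1\Big|.
\end{equation*}
Taking the minimum over the admissible parameters and noting that the right-hand side depends only on the ratio $\frac{\omega}{\alpha}$, I would conclude
\begin{equation*}
\mu_{\rm opt}=\min_{(\alpha,\omega)}\mu_{\rm loc}(\alpha,\omega)\geq \min_{(\alpha,\omega)}\max_{\boldsymbol{\theta}\in T^{\rm high}}\Big|1-\tfrac{\omega}{\alpha}y_1\Big|=\mu^{*}=\frac{1}{3},
\end{equation*}
where the last equality is precisely Lemma \ref{common-eig}.

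Most of the argument is a direct lower bound, so there is no serious computational obstacle; the content lies entirely in the structural observation that the block Braess--Sarazin symbol inherits the scalar Poisson eigenvalue $1-\frac{\omega}{\alpha}y_1$ as one of its three modes. The one point needing care is confirming that $\lambda=\frac{3a}{8\alpha}$ is a genuine eigenvalue of $\widetilde{M}_E^{-1}\widetilde{\mathcal{L}}$ for all $\boldsymbol{\theta}\in T^{\rm high}$ rather than a spurious root, which is immediate because the factor $a-\frac{8}{3}\alpha\lambda$ appears linearly and unconditionally in (\ref{3-eigs-BSR}). Since the stabilization term $c$ enters only the bracketed factor, and hence only the third eigenvalue, this lower-bounding mode is identical for the Poisson-stabilized and projection stabilized cases; the same bound therefore holds for both discretizations simultaneously, giving the claimed statement.
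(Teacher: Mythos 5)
Your proposal is correct and follows essentially the same route as the paper: both read off the generalized eigenvalue $\lambda_2=\frac{3a}{8\alpha}$ from the linear factor of $\pi_E$ in (\ref{3-eigs-BSR}), observe it is independent of the stabilization symbol $c$, and invoke Lemma \ref{common-eig} to obtain the lower bound $\frac{1}{3}$. Your write-up is in fact slightly more explicit about the lower-bounding logic (the paper additionally records the constraint $\frac{2}{3}\leq\omega\leq\frac{4}{3}$ coming from $\lambda_1=1$, which is not needed for the bound itself), but this is a difference of presentation, not of method.
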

\begin{proof}
From (\ref{3-eigs-BSR}), two eigenvalues of $\widetilde { M}_{E}^{-1}\mathcal{\widetilde{ L}}$ are given by
\begin{equation*}
\lambda_1=1,\,\, \lambda_2=\frac{3a}{8\alpha},
\end{equation*}
which are independent of the stabilization term, $c$. From Lemma \ref{common-eig}, we know that for $\lambda_2$, the optimal smoothing factor is $\frac{1}{3}$, under the condition that $\frac{\omega}{\alpha}=\frac{8}{9}$. Note that if $|1-\omega\lambda_1|\leq \frac{1}{3}$,
   then $\frac{2}{3}\leq\omega\leq \frac{4}{3}$. Because there is another eigenvalue, $\lambda_3$, the optimal LFA smoothing factor is not less than $\frac{1}{3}$.
\end{proof}
Similarly to DWJ, we see that the Jacobi relaxation for the Laplacian discretization places a limit on the overall performance of BSR.
From (\ref{3-eigs-BSR}), the third eigenvalue of $\widetilde {M}_{E}^{-1}\widetilde{\mathcal{L}}$ is $\lambda_3=\frac{ac+b}{\frac{8}{3}\alpha c+b}$,  where $b=-(b_1^2+b_2^2)\geq 0$ (because both $b_1$ and $b_2$ are imaginary).  Thus,  we only need to check whether we can choose $\alpha$ and $\omega$ so that $|1-\omega\lambda_3|\leq\frac{1}{3}$ over all high frequencies, while also ensuring $|1-\omega\lambda_1|\leq\frac{1}{3}$ and $|1-\omega\lambda_2|\leq\frac{1}{3}$ .

\begin{theorem}\label{BSR-opt-smoothing}
The optimal smoothing factor for both the Poisson-stabilized and projection stabilized discretizations with exact BSR is
  \begin{equation*}
   \mu_{{\rm opt}}=\displaystyle \min_{(\alpha, \omega)}\max_{ \boldsymbol{\theta}\in T^{{\rm high}}}{\big|\lambda(\mathcal{\widetilde{S}}(\alpha, \omega,\boldsymbol{\theta}))\big|}=\frac{1}{3},
\end{equation*}
if and only if
\begin{equation*}
\frac{\omega}{\alpha}=\frac{8}{9},\,\,\frac{3}{4}\leq \alpha \leq \frac{3}{2}.
\end{equation*}

\end{theorem}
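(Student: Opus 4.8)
The plan is to reduce the three-eigenvalue problem to the two factors $\lambda_1$ and $\lambda_2$ that are already understood, by showing that the remaining eigenvalue $\lambda_3$ is always a convex combination of them at each frequency. Since the preceding theorem already gives the lower bound $\mu_{\rm opt}\geq\frac{1}{3}$, it suffices to characterize exactly the set of pairs $(\alpha,\omega)$ for which $\max_{\boldsymbol{\theta}\in T^{\rm high}}\max_{j}|1-\omega\lambda_j|\leq\frac{1}{3}$; on that set the maximum is necessarily equal to $\frac{1}{3}$, and the proof will be complete.

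First I would handle $\lambda_2=\frac{3a}{8\alpha}$. Writing $\omega\lambda_2=\frac{\omega}{\alpha}y_1$ with $y_1=\frac{3a}{8}$, Lemma~\ref{common-eig} shows that $\max_{\boldsymbol{\theta}}|1-\omega\lambda_2|$ cannot be made smaller than $\frac{1}{3}$ and attains this value if and only if $\frac{\omega}{\alpha}=\frac{8}{9}$. Hence any admissible pair must satisfy $\frac{\omega}{\alpha}=\frac{8}{9}$. Next, $\lambda_1=1$ is frequency-independent, so $|1-\omega\lambda_1|=|1-\omega|\leq\frac{1}{3}$ is equivalent to $\frac{2}{3}\leq\omega\leq\frac{4}{3}$; combined with $\omega=\frac{8}{9}\alpha$, this is exactly $\frac{3}{4}\leq\alpha\leq\frac{3}{2}$, reproducing the claimed interval.

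The key step is the treatment of $\lambda_3=\frac{ac+b}{\frac{8}{3}\alpha c+b}$ with $b=-(b_1^2+b_2^2)\geq 0$ (since $b_1,b_2$ are imaginary). Here I would use that the stabilization symbol $c\geq 0$ (as $c_1,c_2$ are symbols of the symmetric positive semidefinite operator $C$) together with $b\geq 0$ and $\alpha>0$, so that $t:=\frac{\frac{8}{3}\alpha c}{b+\frac{8}{3}\alpha c}\in[0,1]$ is well defined. A direct computation then gives the identity $\lambda_3=(1-t)\cdot 1+t\cdot\frac{3a}{8\alpha}=(1-t)\lambda_1+t\lambda_2$ at each fixed frequency, whence $1-\omega\lambda_3=(1-t)(1-\omega\lambda_1)+t(1-\omega\lambda_2)$. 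The triangle inequality yields $|1-\omega\lambda_3|\leq(1-t)|1-\omega\lambda_1|+t|1-\omega\lambda_2|\leq\frac{1}{3}$ automatically whenever the $\lambda_1$ and $\lambda_2$ bounds hold. Thus $\lambda_3$ imposes no further restriction, the admissible set is precisely $\{\frac{\omega}{\alpha}=\frac{8}{9},\ \frac{3}{4}\leq\alpha\leq\frac{3}{2}\}$, and combining with $\mu_{\rm opt}\geq\frac{1}{3}$ gives $\mu_{\rm opt}=\frac{1}{3}$.

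I anticipate that the main obstacle is recognizing and justifying the convex-combination structure of $\lambda_3$: verifying $c\geq 0$ over all high frequencies so that $t\in[0,1]$, and handling the degenerate frequencies at which $b$ and $c$ vanish simultaneously, where $\lambda_3$ must be interpreted by continuity (at such points the bound $|1-\omega\lambda_3|\leq\frac{1}{3}$ follows from a limiting argument). Once this identity is in place, the rest is the routine bookkeeping of the two elementary bounds on $\lambda_1$ and $\lambda_2$ carried out above.
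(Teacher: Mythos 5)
Your proposal is correct, and its overall skeleton matches the paper's: use Lemma~\ref{common-eig} to force $\frac{\omega}{\alpha}=\frac{8}{9}$ from the eigenvalue $\lambda_2=\frac{3a}{8\alpha}$, use $\lambda_1=1$ to force $\frac{2}{3}\leq\omega\leq\frac{4}{3}$ (equivalently $\frac{3}{4}\leq\alpha\leq\frac{3}{2}$), invoke the preceding theorem for the lower bound $\mu_{\rm opt}\geq\frac{1}{3}$, and then show $\lambda_3$ causes no further loss. Where you differ is in how $\lambda_3$ is controlled. The paper uses a mediant-type chain: since $b\geq 0$ and $c>0$, and \emph{provided} $2=a_{\min}\leq\frac{8}{3}\alpha\leq a_{\max}=4$, one has $\frac{3}{4\alpha}\leq\frac{ac+b}{\frac{8}{3}\alpha c+b}\leq\frac{3}{2\alpha}$, i.e.\ $\lambda_3$ is trapped in the full range of $\lambda_2$ over all high frequencies, whence $\omega\lambda_3\in[\frac{2}{3},\frac{4}{3}]$. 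Your version instead exhibits the pointwise identity $\lambda_3(\boldsymbol{\theta})=(1-t)\lambda_1+t\,\lambda_2(\boldsymbol{\theta})$ with $t=\frac{\frac{8}{3}\alpha c}{b+\frac{8}{3}\alpha c}\in[0,1]$, so that $1-\omega\lambda_3$ is a convex combination of $1-\omega\lambda_1$ and $1-\omega\lambda_2$ at each frequency. The two arguments rest on the same facts ($b\geq 0$ because $b_1,b_2$ are imaginary, and $c>0$ on $T^{\rm high}$ for both stabilizations), but your convex-combination form is cleaner in one respect: it requires no a priori restriction on $\alpha$ at the $\lambda_3$ step, making it transparent that $\lambda_3$ imposes no constraint beyond those already extracted from $\lambda_1$ and $\lambda_2$; the paper's sandwich only goes the right way once $\frac{3}{4}\leq\alpha\leq\frac{3}{2}$ is assumed. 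Your attention to the degenerate frequencies where $b=0$ (namely $\theta_1,\theta_2\in\{0,\pi\}$, where one checks directly that $c>0$ and $t=1$, so $\lambda_3=\lambda_2$) is a detail the paper glosses over. Both proofs are complete and reach the same characterization.
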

\begin{proof}
 Note that $a\in[2,4]$, and choose $\alpha$ such that $2=a_{\rm{min}} \leq \frac{8}{3}\alpha\leq a_{\rm{max}}=4$. If $c$ is positive, the following always holds
\begin{equation*}
\frac{3}{4\alpha}=\frac{a_{\rm{min}}}{\frac{8}{3}\alpha}\leq \frac{a_{\rm{min}}c+b}{\frac{8}{3}\alpha c+b}\leq \frac{ac+b}{\frac{8}{3}\alpha c+b}\leq \frac{a_{\rm{max}}c+b}{\frac{8}{3}\alpha c+b}\leq \frac{a_{\rm{max}}}{\frac{8}{3}\alpha}=\frac{3}{2\alpha}.
\end{equation*}
Furthermore, if $\frac{\omega}{\alpha}=\frac{8}{9}$, we have
\begin{equation}\label{lambda3-bound}
 \frac{2}{3}=\frac{3}{4\alpha}\cdot\frac{8}{9}\alpha\leq \omega \lambda_3 \leq\frac{3}{2\alpha}\cdot\frac{8}{9}\alpha=\frac{4}{3}.
\end{equation}
For both discretizations, we can check that $c>0$ over the high frequencies.   From (\ref{lambda3-bound}), it is easy to see that $|1-\omega\lambda_3|\leq \frac{1}{3}$, with $\alpha=\frac{9}{8}\omega\in[\frac{3}{4},\frac{3}{2}]$.
\end{proof}

\subsection{Inexact Braess-Sarazin relaxation}
Here, we also consider solving the Schur complement equation,  (\ref{schur-solution-of-precondtion}), by weighted Jacobi relaxation with weight, $\omega_J$. Following \cite{MR1810326}, we refer to this as  inexact Braess-Sarazin relaxation (IBSR). Let the corresponding block preconditioner be $M_{I}$, given by

\begin{equation*}
   M_{I}=  \begin{pmatrix}
      \alpha D & B^{T}\\
     B & \hat{S}+B(\alpha D)^{-1}B^{T}\\
    \end{pmatrix}
\end{equation*}
where $\hat{S}$ is the approximation of  $-S=-B(\alpha D)^{-1}B^{T}-C$ used in (\ref{schur-solution-of-precondtion}). For one sweep of weighted Jacobi relaxation, $\hat{S}$ is given by
\begin{equation*}
  \hat{S}_1 = -\frac{1}{\omega_J}{\rm diag}(S),
\end{equation*}
and for 2 sweeps of weighted Jacobi relaxation with equal weights, $\hat{S}$ is given by
\begin{equation*}
    \hat{S}_2 = \hat{S}_1\Big(2I+\hat{S}_1^{-1}S\Big)^{-1}.
\end{equation*}
By direct computation, $B(\alpha D)^{-1}B^T:=S_{0}$ can be written in terms of a $5\times 5$ stencil:
\begin{equation}\label{5-stencil-Jacobi}
S_{0}=\frac{h^2}{\alpha}\begin{pmatrix}
      -1/192  &        -1/48    &      -1/24    &      -1/48    &      -1/192\\
      -1/48   &        0    &          1/24    &       0      &       -1/48\\
      -1/24    &       1/24  &         3/16     &      1/24    &      -1/24\\
      -1/48    &       0      &        1/24      &     0        &     -1/48\\
      -1/192   &      -1/48    &      -1/24       &   -1/48      &    -1/192\\
\end{pmatrix}.
\end{equation}
The symbol of $S_{0}$ is $\widetilde{S_0}=\frac{3b}{8\alpha}:=\varsigma$ for $b=-b_1^2-b_2^2$. In fact, $\varsigma =\widetilde{B}(\widetilde{\alpha D})^{-1}\widetilde{B^T}$. Let $\gamma$ be the symbol of $\hat{S}_1$,
\begin{equation*}
\gamma = \left\{\begin{array}{cl} -\frac{h^2}{24\omega_J}(\frac{9}{2\alpha}+\frac{8}{3}), & \mbox{ for PoSD}\\
    -\frac{h^2}{24\omega_J}(\frac{9}{2\alpha}+\frac{14}{3}), & \mbox{ for PrSD}\\
    \end{array}\right.
\end{equation*}
Similarly, let $\eta$ be the symbol of $\hat{S}+B(\alpha D)^{-1}B^{T}$,
\begin{equation*}
\eta = \left\{\begin{array}{cl}  \gamma+\varsigma, & \mbox{for  one sweep} \,\, (\hat{S}_1)\\
    \Big(2+\tau\gamma^{-1}\Big)^{-1}\gamma+\varsigma, & \mbox{for two sweeps} \,\, (\hat{S}_2)\\
    \end{array}\right.
\end{equation*}
where
\begin{equation*}
\tau = \left\{\begin{array}{cl} \varsigma+c_1, & \mbox{ for PoSD}\\
    \varsigma+c_2, & \mbox{ for PrSD}\\
    \end{array}\right.
\end{equation*}
Finally, the symbol of $M_{I}$ is given by
\begin{equation}\label{Stable-IBSR-sym-MI}
   \widetilde{M}_{I}(\theta_1,\theta_2) =\begin{pmatrix}
       \frac{8}{3}\alpha & 0 & b_1  \\
      0 &   \frac{8}{3}\alpha & b_2 \\
      -b_1   &  -b_2 &\eta
    \end{pmatrix}.
\end{equation}
The symbol of the error-propagation matrix for IBSR is $\mathcal{\widetilde{S}}_{I}(\alpha, \omega,\boldsymbol{\theta})=I- \omega \widetilde {M}_{I}^{-1}\widetilde{\mathcal{L}}$.
A standard calculation shows that the determinant of $\widetilde{\mathcal{L}}-\lambda \widetilde{M}_{I}$ is
\begin{eqnarray}\label{eigs-IBSR}
 \pi_{I}(\lambda;\alpha,\omega,\omega_J) = -(a-\frac{8}{3}\alpha \lambda)\bigg[(b-\frac{8\alpha\eta}{3})\lambda^2+(a\eta-\frac{8\alpha c}{3}-2b)\lambda+ac+b\bigg].
\end{eqnarray}

From (\ref{eigs-IBSR}), we see there is an eigenvalue $\frac{3a}{8\alpha}$, which is the same as that of exact BSR. As before, the question now becomes whether there is a choice of $\omega,\alpha$  and $\omega_J$ such that convergence equal to that of exact BSR can be achieved. We leave this as an open question for future work and, instead, numerically optimize the two-grid convergence factor over these parameters.

\begin{remark}
A similar form to (\ref {Stable-IBSR-sym-MI}) occurs for inexact BSR applied to the stable $Q_2-Q_1$ approximation, modifying the stencil of $C$ to be zero, and accounting for the block structure shown in (\ref{Full-symbol}).
\end{remark}

 \subsection{Numerical experiments for stabilized discretizations}\label{sec:Numer}
We now present LFA predictions, validating DWJ, (I)BSR, and the related Uzawa iteration  against
 measured multigrid performance for these schemes. We consider the homogeneous problem in (\ref{Stokes1-FEM}),  with periodic boundary conditions, and a random initial guess, $x_h^{(0)}$.

Convergence is measured using the averaged convergence  factor, $\hat{\rho}_{h}=\sqrt[k]{\frac{\|d_{h}^{(k)}\|_{2}}{\|d_{h}^{(0)}\|_{2}}}$, with $k=100$, and $d_{h}^{(k)}=b-Kx_h^{(k)}$. The LFA predictions are made with $h=1/128$, for both the smoothing factor, $\mu$, and two-grid convergence factor, $\rho_h$. For testing, we use standard $W(\nu_1,\nu_2)$ cycles with bilinear interpolation for $Q_1$ variables and biquadratic interpolation for $Q_2$ variables, and their adjoints for restriction. We consider both rediscretization and Galerkin coarsening, noting that they coincide for all terms except the stabilization terms that include a scaling of $h^2$. The coarsest grid is a mesh with 4 elements. Where significant differences arise, we also report two-grid convergence rates for $TG(\nu_1,\nu_2)$ cycles.

\subsubsection{PoSD with DWJ}

From the range of parameters allowed in (\ref{beta-DWJ-Parameter-domain}), we select $\alpha_1=1.451,\,\,\alpha_2=1.000$, and $\omega=1.290$ (for convenience, satisfying the equality in (\ref{beta-DWJ-Parameter-domain})) to compute the LFA predictions. Figure \ref{beta-dis} shows the spectrum of the two-grid error-propagation operators for DWJ relaxation with rediscretization and Galerkin coarsening. Note that  the two-grid  convergence factor is the same as the optimal smoothing factor for rediscretization, but not for Galerkin coarsening.

\begin{figure}[H]
\centering
\includegraphics[width=6.5cm,height=5.5cm]{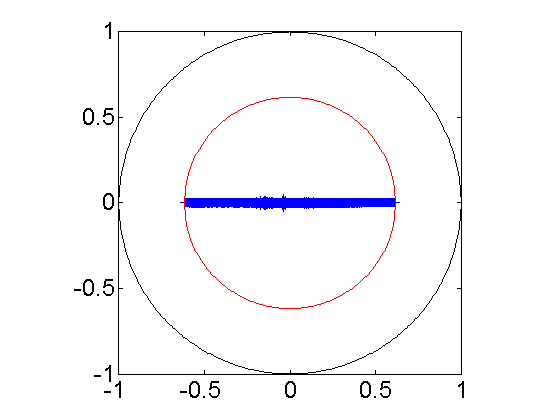}
\includegraphics[width=6.5cm,height=5.5cm]{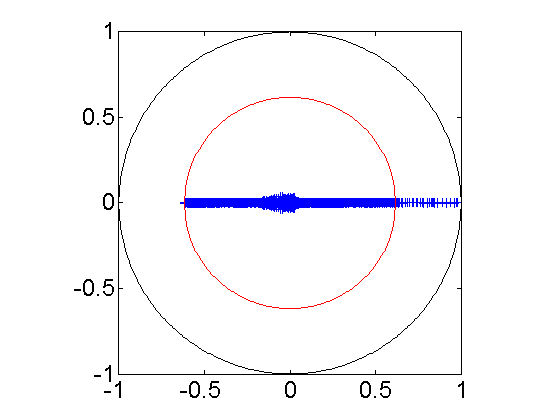}
\caption{The spectrum of the two-grid error-propagation operator using DWJ for PoSD. Results with rediscretization are shown at left, while those with Galerkin coarsening are at right. In both figures, the inner circle has radius equal to the LFA smoothing factor.} \label{beta-dis}
\end{figure}

In order to see the sensitivity of performance to parameter choice, we consider the two-grid LFA convergence factor  with rediscretization coarsening. From (\ref{beta-DWJ-Parameter-domain}), we know that there are many optimal parameters.  To fix a single parameter for DWJ, we consider the case of $\omega=\frac{459}{356}$ and, at the left of Figure \ref{beta-omega-alpha1}, we present the LFA-predicted two-grid convergence factors for DWJ with variation in $\alpha_1$ and $\alpha_2$. Here, we see strong sensitivity to ``too small'' values of both parameters, for $\alpha_1<1$ and $\alpha_2<0.9$, including a notable portion of the optimal range of values predicted by the LFA smoothing factor.  At the right of Figure \ref{beta-omega-alpha1},  we fix $\alpha_2=\frac{356}{459}\omega$ and vary $\omega$ and $\alpha_1$.  The two lines are the lower and upper bounds from (\ref{beta-DWJ-Parameter-domain}), between which LFA predicts the optimal convergence factor should be achieved. Note that not all of the allowed parameters obtain the optimal convergence factor.   Here, we see great sensitivity for large values of $\omega$, but a large range with  generally similar performance as in the optimal parameter case.

 \begin{figure}[H]
\centering
\includegraphics[width=6.5cm,height=5.5cm]{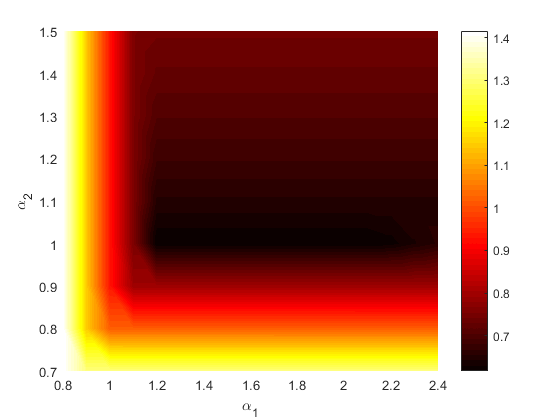}
\includegraphics[width=6.5cm,height=5.5cm]{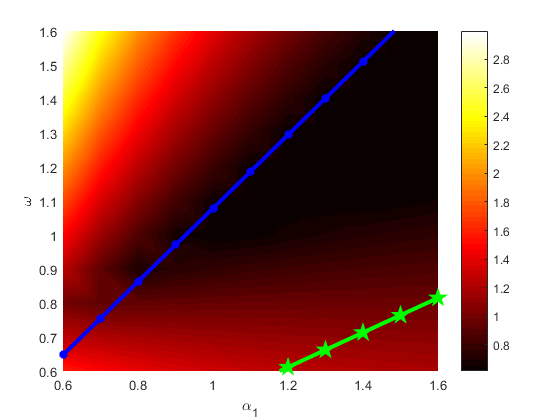}
\caption{The two-grid LFA convergence factor for the PoSD using DWJ  and rediscretization. At left, we fix $\omega=\frac{459}{356}$ and vary $\alpha_1$ and $\alpha_2$. At right, we fix $\alpha_2=\frac{356}{459}\omega$ and vary $\omega$ and $\alpha_1$.} \label{beta-omega-alpha1}
\end{figure}

In Table \ref{stabilized-R-W}, we present the multigrid performance of DWJ with $W$-cycles for rediscretization coarsening.
These results show measured multigrid convergence factors that coincide with the LFA-predicted two-grid convergence factors.
Similar results are seen for $V$-cycles with rediscretization. For Galerkin coarsening, nearly identical $W$-cycle results are seen when $\nu_1+\nu_2>2$, but divergence is seen for $W$-cycles with $\nu_1+\nu_2=1$ or 2, and for all $V$-cycles tested. In Table \ref{stabilized-R-W-2Jacobi}, we report the multigrid performance of DWJ using 2 sweeps of Jacobi relaxation on the pressure equation with rediscretization for PoSD. Here, we take $\alpha_1=3/2, \omega_J=1, \omega=4/3$ as in Theorem \ref{optimal-PoSD-DWJ-2Jacobi}. We see that the LFA convergence factor accurately  predicts the measured  performance.

\begin{table}[H]
 \caption{$W$-cycle convergence factors, $\hat{\rho}_h$, for DWJ with rediscretization for PoSD, compared with LFA two-grid predictions, $\rho_h$. Here, the algorithmic parameters are $\alpha_1=1.451, \alpha_2=1.000, \omega=1.290$ and the LFA smoothing factor is $\mu=0.618$.}
\centering
\begin{tabular}{|l|c|c|c|c|c|c|}
\hline
Cycle  & $W(0,1)$  & $W(1,0)$   &$W(1,1)$   & $W(1,2)$  &$W(2,1)$   &$W(2,2)$  \\
\hline
\hline
$\rho_{h=1/128}$          &0.618     &0.618    &0.382     &0.236    &0.236    &0.146\\
\hline
$\hat{\rho}_{h=1/64}$     &0.564     & 0.568    &0.349      & 0.215      &0.214     &0.133 \\
\hline
$\hat{\rho}_{h=1/128}$     &0.561     & 0.568    &0.348      & 0.215      &0.214     &0.132 \\
\hline
\end{tabular}\label{stabilized-R-W}
\end{table}

\begin{table}[H]
 \caption{$W$-cycle convergence factors, $\hat{\rho}_h$, for DWJ with 2 sweeps of Jacobi on the pressure equation for PoSD with rediscretization, compared with LFA two-grid predictions, $\rho_h$. Here, the algorithmic parameters are $\alpha_1=3/2, \omega_J=1, \omega=4/3$ and the LFA smoothing factor is $\mu=0.333$.}
\centering
\begin{tabular}{|l|c|c|c|c|c|c|}
\hline
Cycle  & $W(0,1)$  & $W(1,0)$   &$W(1,1)$   & $W(1,2)$  &$W(2,1)$   &$W(2,2)$  \\
\hline
\hline
$\rho_{h=1/128}$          &0.338     &0.338    &0.115     &0.078    &0.078    &0.061\\
\hline
$\hat{\rho}_{h=1/64}$     &0.324     &0.324    &0.112     &0.074    &0.075    &0.074\\
\hline
$\hat{\rho}_{h=1/128}$     &0.324     &0.324    &0.112     &0.075    &0.075    &0.073\\
\hline
\end{tabular}\label{stabilized-R-W-2Jacobi}
\end{table}

\subsubsection{PrSD with DWJ}

From the range of parameters allowed in (\ref{D-DWJ-Parameter-domain}), we choose $\alpha_1=1,\,\,\alpha_2=1,\,\,\omega=\frac{108}{97}$. Figure \ref{D-dis-eig} shows that the smoothing factor provides a good prediction for the two-grid convergence factor with rediscretization, but not with Galerkin coarsening.
\begin{figure}[H]
\centering
\includegraphics[width=6.5cm,height=5.5cm]{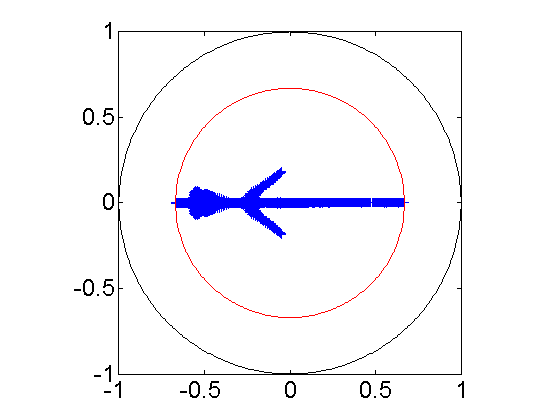}
\includegraphics[width=6.5cm,height=5.5cm]{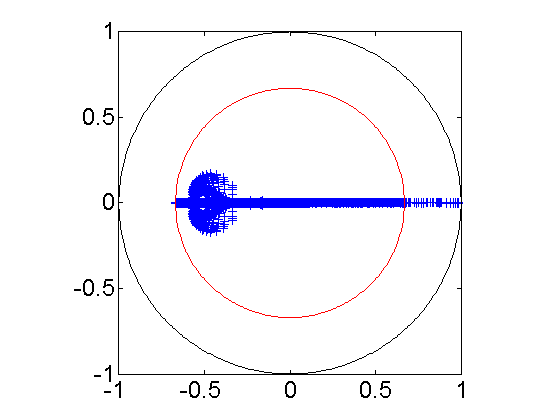}
\caption{The spectrum of the two-grid error-propagation operator using DWJ for PrSD. Results with rediscretization are shown at left, while those with Galerkin coarsening are at right. In both figures, the inner circle has radius equal to the LFA smoothing factor.} \label{D-dis-eig}
\end{figure}


Similarly to the discussion above, we consider the sensitivity to parameter choice for DWJ applied to PrSD. To fix a single parameter for DWJ, we consider  the case of $\omega=\frac{108}{97}$.   At the left of Figure \ref{D-omega-alpha1}, we present the LFA-predicted convergence factors for DWJ with variation in $\alpha_1$ and $\alpha_2$, again seeing a strong sensitivity to ``too small'' values of the parameters. At the right of Figure \ref{D-omega-alpha1}, we fix $\alpha_2=\frac{97}{108}\omega$.  The two lines are the lower and upper bounds from  (\ref{D-DWJ-Parameter-domain}), between which LFA predicts the optimal convergence factor should be achieved. Note that not all of the parameters in this range obtain the optimal convergence factor. We see that, for small $\alpha_1$, the convergence factor is very sensitive to large values of $\omega$.
 \begin{figure}[H]
\centering
\includegraphics[width=6.5cm,height=5.5cm]{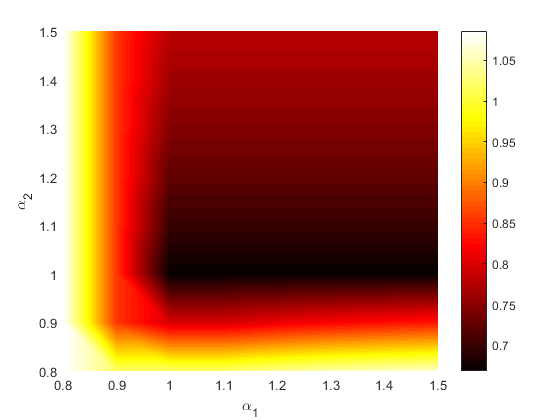}
\includegraphics[width=6.5cm,height=5.5cm]{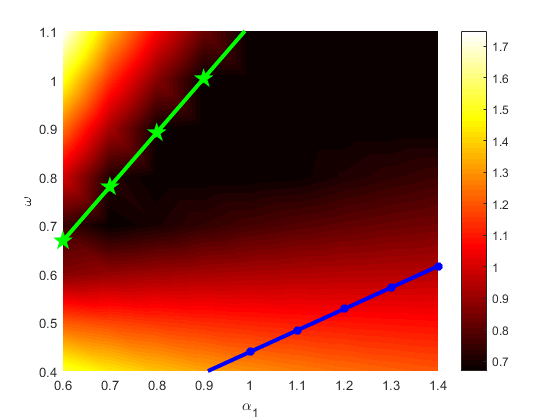}
\caption{The two-grid LFA convergence factor for the PrSD using DWJ  and rediscretization. At left, we fix $\omega=\frac{108}{97}$ and vary $\alpha_1$ and $\alpha_2$. At right, we fix $\alpha_2=\frac{97}{108}\omega$ and vary $\omega$ and $\alpha_1$.} \label{D-omega-alpha1}
\end{figure}


In Table \ref{W-Dohrmann}, we present the multigrid performance of DWJ relaxation with $W$-cycles for rediscretization coarsening.
We see that the measured multigrid convergence factors match well with the LFA-predicted two-grid convergence factors.  For Galerkin
 coarsening, as in the case of PoSD, we see divergence when $\nu_1+\nu_2\leq2$, but performance matching that of rediscretization for $\nu_1+\nu_2>2$. Here, $V$-cycle results are similar to the $W$-cycle results for both rediscretization and Galerkin coarsening approaches. In Table \ref{PrSD-W-2Jacobi}, we compare the LFA predictions with multigrid performance for DWJ using 2 sweeps of Jacobi relaxation on the pressure equation. Here, we take $\alpha_1=3/2, \omega_J=1, \omega=4/3$ as in Theorem \ref{optimal-PrSD-DWJ-2Jacobi}, and observe a good match between the LFA predictions and measured performance.

\begin{table}[H]
 \caption{$W$-cycle convergence factors, $\hat{\rho}_h$, for DWJ with rediscretization for PrSD, compared with LFA two-grid predictions, $\rho_h$. Here, the algorithmic parameters are $\alpha_1=1, \alpha_2=1, \omega=108/97$ and  the LFA smoothing factor is $\mu=0.670$.}
\centering
\begin{tabular}{|l|c|c|c|c|c|c|}
\hline
Cycle   & $W(0,1)$  & $W(1,0)$   &$W(1,1)$   & $W(1,2)$  &$W(2,1)$   &$W(2,2)$  \\
\hline
\hline
$\rho_{h=1/128}$          &0.670     &0.670      &0.449     &0.300    &0.300     &0.201\\
\hline
$\hat{\rho}_{h=1/64}$     & 0.652    &0.652       &0.436      &0.291      &0.292    &0.196   \\
\hline

$\hat{\rho}_{h=1/128}$     &0.651      &0.652     &0.435     &0.291     & 0.291    & 0.195 \\
\hline
\end{tabular}\label{W-Dohrmann}
\end{table}

\begin{table}[H]
 \caption{$W$-cycle convergence factors, $\hat{\rho}_h$, for DWJ with 2 sweeps of Jacobi on the pressure equation for PrSD with rediscretization, compared with LFA two-grid predictions, $\rho_h$. Here, the algorithmic parameters are $\alpha_1=3/2, \omega_J=1, \omega=4/3$ and the LFA smoothing factor is $\mu=0.333$.}
\centering
\begin{tabular}{|l|c|c|c|c|c|c|}
\hline
Cycle  & $W(0,1)$  & $W(1,0)$   &$W(1,1)$   & $W(1,2)$  &$W(2,1)$   &$W(2,2)$  \\
\hline
\hline
$\rho_{h=1/128}$          &0.333     &0.333    &0.112     &0.079    &0.079    &0.062\\
\hline
$\hat{\rho}_{h=1/64}$     &0.324     &0.324    &0.112     &0.074    &0.075    &0.074\\
\hline
$\hat{\rho}_{h=1/128}$     &0.324     &0.324    &0.112     &0.075    &0.075    &0.073\\
\hline
\end{tabular}\label{PrSD-W-2Jacobi}
\end{table}

\subsubsection{PoSD with BSR}
Next, we consider BSR for PoSD, first displaying the two-grid LFA convergence factor as a function of $\alpha$ for rediscretization coarsening with $\omega=\frac{8}{9}\alpha$ in Figure \ref{beta-BSR-CS(1,1)}. Comparing the convergence factor with $\mu^{2}$, for $\nu_1=\nu_2=1$, we see a good match over the interior of the interval $\frac{3}{4}\leq \alpha\leq \frac{3}{2}$ predicted by Theorem \ref{BSR-opt-smoothing}. For larger values of $\nu_1+\nu_2$, this agreement deteriorates as is typical when the behavior of coarse-grid correction becomes dominant.
At the right of Figure \ref{beta-BSR-CS(1,1)}, we see good agreement between $\rho$ and $\mu$ when $\nu_1+\nu_2=1$ with fixed $\alpha=1$. In both cases, similar behaviour is seen with Galerkin coarsening.

\begin{figure}[H]
\centering
\includegraphics[width=6.5cm,height=5.5cm]{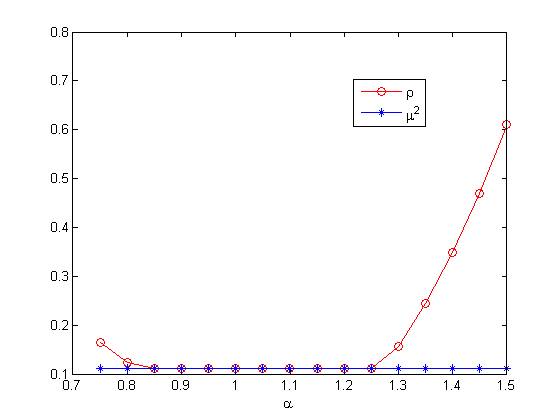}
\includegraphics[width=6.5cm,height=5.5cm]{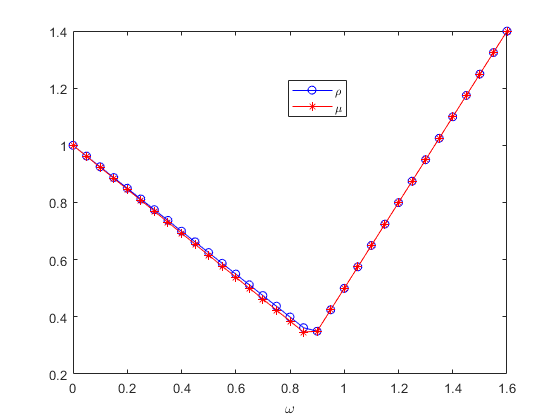}
\caption{Two-grid and smoothing factors for BSR with rediscretization for PoSD. At left, comparing $\rho$ with $\mu^2$ for $\nu_1=\nu_2=1$ with $\omega=\frac{8}{9}\alpha$. At right, comparing $\rho$ with $\mu$ for $\nu_1+\nu_2=1$ with $\alpha=1$.}  \label{beta-BSR-CS(1,1)}
\end{figure}

Motivated by the above, we use $\alpha =1$ and $\omega=\frac{8}{9}$ for multigrid experiments with rediscretization, solving the Schur complement equation exactly.
 Table \ref{exact-BSR-stabilized-R-W} shows that the measured multigrid convergence factors match well with the LFA-predicted two-grid
 convergence factors for $W$-cycles with rediscretization coarsening, and similar results are seen for Galerkin coarsening.

\begin{table}[H]
 \caption{$W$-cycle convergence factors, $\hat{\rho}_h$, for BSR with rediscretization for PoSD, compared with LFA two-grid predictions, $\rho_h$. Here, the algorithmic parameters are $\alpha=1, \omega=\frac{8}{9}$ and the LFA smoothing factor is $\mu=0.333$.}
\centering
\begin{tabular}{|l|c|c|c|c|c|c|}
\hline
Cycle  & $W(0,1)$  & $W(1,0)$   &$W(1,1)$   & $W(1,2)$  &$W(2,1)$   &$W(2,2)$  \\
\hline
\hline
$\rho_{h=1/128}$                   &0.333    &0.333    &0.111     &0.079    & 0.079    &0.062  \\
\hline
$\hat{\rho}_{h=1/64}$     & 0.324    &0.323    &0.112     &0.075    &0.075     &0.058 \\
\hline
$\hat{\rho}_{h=1/128}$     &0.323    &0.323         &0.112      &0.075       &0.075     &0.058 \\
\hline
\end{tabular}\label{exact-BSR-stabilized-R-W}
\end{table}

In Table \ref{PoSD-LFA-OPT-Prediction}, we report the LFA prediction for IBSR with different parameters and one or two sweeps of Jacobi relaxation on the approximate Schur complement for PoSD. Here, we clearly see the benefit of two sweeps of relaxation on the approximate Schur complement over a single sweep, as well as that better performance is possible when (numerically) optimizing all of the parameters for IBSR independent of the optimization for exact BSR.

\begin{table}[H]
\caption{LFA predictions: two-grid convergence factor, $\rho_{h=1/128}$, and smoothing factor, $\mu$, of IBSR with rediscretization for PoSD with $\nu_1+\nu_2=1$.}
\centering
\begin{tabular}{|l|c|c|c|c|c|}
\hline
Sweep  & $\alpha$  & $\omega$   &$\omega_J$   &$\mu$  &$\rho_{h=1/128}$  \\
\hline
\hline
1 (Optimized)                 &1.2    &1.1    &0.7     &0.679   &0.679     \\
\hline
1                           &1.0    &8/9    & 1.0    &0.669   &0.735     \\
\hline
2 (Optimized)                &1.1    &1.0    &1.0     &0.366   &0.366     \\
\hline
2                  &1.0    &8/9    &1.0     &0.461   &0.461     \\
\hline
\end{tabular}\label{PoSD-LFA-OPT-Prediction}
\end{table}

Table \ref{IBSR-PoSD-TG-Jacobi-opt} shows that the LFA-predicted 2-grid convergence factors closely match those seen in practice. However, as shown  in Table \ref{IBSR-PoSD-2-Jaocbi}, significant degradation is seen when considering W-cycles, particularly as $\nu_1+\nu_2$ increases.

\begin{table}[H]
 \caption{Two-grid convergence factor, $\hat{\rho}_h$  for IBSR with 2 sweeps of Jacobi with rediscretization for PoSD, compared with LFA two-grid predictions, $\rho_h$, with optimized parameters, $\alpha=1.1,\omega=1.0$ and $\omega_J=1$.}
\centering
\begin{tabular}{|l|c|c|c|c|c|c|}
\hline
Cycle   & $TG(0,1)$  & $TG(1,0)$   &$TG(1,1)$   & $TG(1,2)$  &$TG(2,1)$   &$TG(2,2)$  \\
\hline
\hline
$\rho_{h=1/128}$                   &0.366     &0.366   &0.167     &0.128   & 0.128    &0.106  \\
\hline
$\hat{\rho}_{h=1/64}$      &0.352     &0.353   &0.160     &0.120    & 0.120    &0.100  \\
\hline
$\hat{\rho}_{h=1/128}$     &0.352     &0.353   &0.160     &0.122    & 0.122    &0.100  \\
\hline
\end{tabular}\label{IBSR-PoSD-TG-Jacobi-opt}
\end{table}

\begin{table}[H]
 \caption{$W$-cycles convergence factors, $\hat{\rho}_h$,  for IBSR with 2 sweeps of Jacobi with rediscretization for PoSD, compared with LFA two-grid predictions, $\rho_h$  with optimized parameters, $\alpha=1.1, \omega=1.0$ and $\omega_J=1$.}
\centering
\begin{tabular}{|l|c|c|c|c|c|c|}
\hline
Cycle   & $W(0,1)$  & $W(1,0)$   &$W(1,1)$   & $W(1,2)$  &$W(2,1)$   &$W(2,2)$  \\
\hline
\hline
$\rho_{h=1/128}$                 &0.366     &0.366   &0.167     &0.128   & 0.128    &0.106  \\
\hline
$\hat{\rho}_{h=1/64}$        &0.456     &0.453  &0.245     &0.197    &0.200    &0.167 \\
\hline
$\hat{\rho}_{h=1/128}$       &0.459     &0.462    &0.257     &0.206    &0.211    &0.175 \\
\hline
\end{tabular}\label{IBSR-PoSD-2-Jaocbi}
\end{table}

The gap between the results seen for exact BSR in Table \ref{exact-BSR-stabilized-R-W} and those for IBSR in Table \ref{IBSR-PoSD-2-Jaocbi} is quite significant. To maintain the performance observed for exact BSR, we could simply use  more Jacobi iterations on the Schur complement system in IBSR; however, experiments showed that this did not lead to a scalable algorithm. Instead, we consider solving the Schur complement system by applying a multigrid $W(1,1)$-cycle using weighted Jacobi relaxation with weight $\omega_J$, shown in Table \ref{inner-W-PoSD-Redis}. From Table \ref{inner-W-PoSD-Redis}, we observe that using only 1 or 2 $W(1,1)$-cycles on the approximate Schur complement achieves  convergence factors essentially  matching those in Table \ref{exact-BSR-stabilized-R-W}, showing that the $W(1,1)$ cycle is the most cost effective.


\begin{table}[H]
 \caption{$W$-cycle convergence factors, $\hat{\rho}_h$, for IBSR  with inner $W(1,1)$-cycle for PoSD and $(\alpha,\omega,\omega_J)=(1,8/9,1)$. In brackets, minimum value of the number of inner $W(1,1)$-cycles that achieves the same convergence factors as those of LFA predictions, $\rho_h$, for exact BSR.}
\centering
\begin{tabular}{|l|l|l|l|l|l|l|}
\hline
Cycle   & $W(0,1)$  & $W(1,0)$   &$W(1,1)$   & $W(1,2)$  &$W(2,1)$   &$W(2,2)$  \\
\hline
\hline
$\rho_{h=1/128}$       &0.333    &0.333    &0.111     &0.079    & 0.079    &0.062  \\
\hline
$\hat{\rho}_{h=1/64}$     &0.368(2)    &0.346(2)    &0.131(2)     &0.075(2)    &0.075(2)   &0.059(1)  \\
\hline
$\hat{\rho}_{h=1/128}$     &0.343(2)    &0.351(2)   &0.111(2)     &0.075(2)    &0.075(2)     &0.063(1)  \\
\hline
\end{tabular}\label{inner-W-PoSD-Redis}
\end{table}

\subsubsection{PrSD with BSR}
We now consider BSR for  PrSD. At the left of Figure \ref{D-BSR-CS(1,0)}, we see a good agreement between the two-grid convergence factor and $\mu^{2}$ for $\nu_1=\nu_2=1$ for some parameters in the range defined in Theorem \ref{BSR-opt-smoothing} when using rediscretization. A larger interval of agreement is seen for the corresponding results for Galerkin coarsening.  In both cases, agreement between the two-grid convergence factor and $\mu^{\nu_1+\nu_2}$ degrades as $\nu_1+\nu_2$ increases, as expected.

 Note that Theorem \ref{BSR-opt-smoothing} demonstrates that the smoothing factor for BSR is a function of
   $\frac{\omega}{\alpha}$ (but the same is not necessarily true for the convergence factor). In Figure \ref{D-BSR-CS(1,0)}, we plot the LFA smoothing and convergence factors for BSR with rediscretization as a function of $\omega$, with $\alpha=0.8$ and see that these factors generally agree, although the smoothing factor slightly underestimates the convergence factor. As two-grid convergence  is, however, sensitive to the choice of $\alpha$, the smoothing factor generally underestimates the convergence factor for other values of $\alpha$.

\begin{figure}[H]
\centering
\includegraphics[width=6.5cm,height=5.5cm]{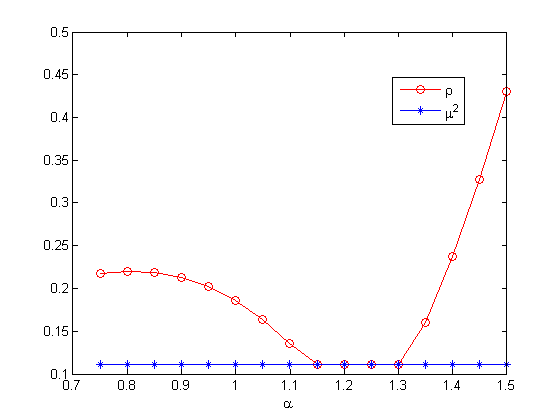}
\includegraphics[width=6.5cm,height=5.5cm]{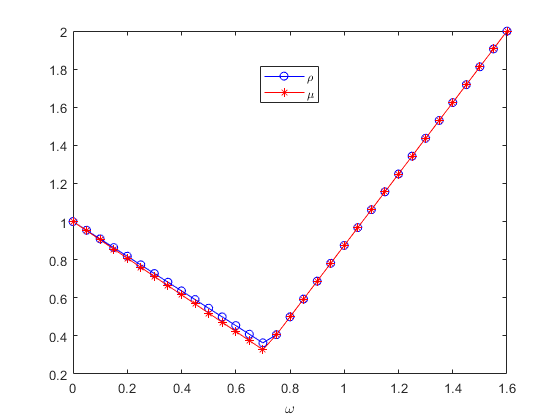}
\caption{Two-grid and smoothing factors for BSR with rediscretization for PrSD. At left, comparing $\rho$ with $\mu^2$ for $\nu_1=\nu_2=1$ with $\omega=\frac{8}{9}\alpha$. At right, comparing $\rho$ with $\mu$ for $\nu_1+\nu_2=1$ with $\alpha=\frac{4}{5}$.} \label{D-BSR-CS(1,0)}
\end{figure}

 Fixing $\omega=\frac{8}{9}\alpha$ with $\alpha =1.2$ (as suggested by Figure \ref{D-BSR-CS(1,0)} for $\nu_1=\nu_2=1$), Table \ref{exact-BSR-W-Dohrmann} shows that the measured multigrid convergence factors again match well with the LFA-predicted two-grid convergence factors for  $W$-cycles with  rediscretization coarsening. Note, however, the degradation for $\nu_1+\nu_2=1$, where the smoothing factor analysis predicts a convergence factor of $\frac{1}{3}$ that is not realized. The convergence factor of $\frac{1}{3}$ can be achieved by choosing $\alpha=\frac{4}{5}$ and $\omega=\frac{8}{9}\alpha$ in the BSR scheme with either $W(1,0)$ or $W(0,1)$ cycles, but these choices lead to  a slight degradation  with $\nu_1+\nu_2>1$.  Similar results are seen for Galerkin coarsening with $\alpha=1$ and $\omega=\frac{8}{9}\alpha$ with the notable exception that the smoothing factor prediction was matched by both the two-grid LFA convergence factor and true $W$-cycle convergence in this case for all experiments.

\begin{table}[H]
 \caption{$W$-cycle convergence factors, $\hat{\rho}_h$, for BSR with rediscretization for PrSD, compared with LFA two-grid predictions, $\rho_h$, with algorithmic parameters, $\alpha=1.2$ and $\omega=\frac{8}{9}\alpha$.}
\centering
\begin{tabular}{|l|c|c|c|c|c|c|}
\hline
Cycle   & $W(0,1)$  & $W(1,0)$   &$W(1,1)$   & $W(1,2)$  &$W(2,1)$   &$W(2,2)$  \\
\hline
\hline
$\rho_{h=1/128}$                   &0.673     & 0.673     &0.111     &0.079    &0.079     &0.062 \\
\hline
$\hat{\rho}_{h=1/64}$      &0.585     &0.585      &0.112     &0.075    &0.075     &0.058  \\
\hline

$\hat{\rho}_{h=1/128}$     &0.584     &0.584      &0.112     &0.075    &0.075     &0.058            \\
\hline
\end{tabular}\label{exact-BSR-W-Dohrmann}
\end{table}

In Table \ref{PrSD-LFA-OPT-Prediction}, we report the LFA prediction for IBSR with one or two sweeps of Jacobi relaxation on the approximate Schur complement with different parameters for PrSD. As in the case of PoSD, one sweep is not enough to obtain performance comparable to exact BSR, and there is a significant advantage to independently optimizing the parameters for IBSR.

\begin{table}[H]
\caption{LFA predictions: two-grid convergence factor, $\rho_{h=1/128}$, and smoothing factor, $\mu$, of IBSR with rediscretization for PrSD with $\nu_1+\nu_2=1$.}
\centering
\begin{tabular}{|l|c|c|c|c|c|}
\hline
Sweep  & $\alpha$  & $\omega$   &$\omega_J$   &$\mu$  &$\rho_{h=1/128}$  \\
\hline
\hline
1 (Optimized)                 &1.6    &0.8   &1     &0.714   &0.714     \\
\hline
1                   &1.2    &16/15    &1.0    & 0.718  &1.027    \\
\hline
2 (Optimized)                &1.2    &0.9    &1.2    &0.494   &0.445     \\
\hline
2                   &1.2    &16/15    &1.0    &0.431   &0.549     \\
\hline
\end{tabular}\label{PrSD-LFA-OPT-Prediction}
\end{table}

Considering, then, the two-grid method with these optimized parameters and two relaxation sweeps on the approximate Schur complement, Table \ref{IBSR-TG-PrSD-2-sweep-opt} shows that two-grid LFA offers a good prediction of performance. In Table \ref{IBSR-PrSD-2-sweeps}, however, we see degraded performance when using $W$-cycles.

\begin{table}[H]
 \caption{Two-grid convergence factors, $\hat{\rho}_h$  for IBSR with 2 sweeps of Jacobi for PrSD  with rediscretization, compared with LFA two-grid predictions, $\rho_h$, with optimized parameters, $\alpha=1.2,\omega=0.9$ and $\omega_J=1.2$.}
\centering
\begin{tabular}{|l|c|c|c|c|c|c|}
\hline
Cycle   & $TG(0,1)$  & $TG(1,0)$   &$TG(1,1)$   & $TG(1,2)$  &$TG(2,1)$   &$TG(2,2)$  \\
\hline
\hline
$\rho_{h=1/128}$                 &0.445    & 0.445     &0.319     &0.262   &0.262     &0.225 \\
\hline
$\hat{\rho}_{h=1/64}$     &0.418    & 0.420     &0.301     &0.251    &0.250     &0.212  \\
\hline

$\hat{\rho}_{h=1/128}$     &0.420   &0.420      &0.304     &0.250     &0.249    &0.212  \\
\hline
\end{tabular}\label{IBSR-TG-PrSD-2-sweep-opt}
\end{table}

\begin{table}[H]
\caption{$W$-cycles convergence factors, $\hat{\rho}_h$  for IBSR with 2 sweeps of Jacobi  for PrSD  with rediscretization, compared with LFA two-grid predictions, $\rho_h$, with optimized parameters, $\alpha=1.2,\omega=0.9$ and $\omega_J=1.2$.}
\centering
\begin{tabular}{|l|c|c|c|c|c|c|}
\hline
Cycle   & $W(0,1)$  & $W(1,0)$   &$W(1,1)$   & $W(1,2)$  &$W(2,1)$   &$W(2,2)$  \\
\hline
\hline
$\rho_{h=1/128}$                  &0.445    & 0.445     &0.319     &0.262   &0.262     &0.225 \\
\hline
$\hat{\rho}_{h=1/64}$     &0.739     &0.740     &0.340   &0.304   &0.299    &0.268  \\
\hline

$\hat{\rho}_{h=1/128}$    &0.736     &0.735     &0.342   &0.309   &0.311    &0.276  \\
\hline
\end{tabular}\label{IBSR-PrSD-2-sweeps}
\end{table}

Thus, we again  consider solving the Schur complement system by applying a multigrid $W(1,1)$-cycle. Table \ref{IBSR-inner-W-Dohrmann} shows that this IBSR is seen to be  effective, requiring 1 to 4 $W(1,1)$ cycles on the Schur complement system to match the convergence seen in Table \ref{exact-BSR-W-Dohrmann}. Again, $W(1,1)$ cycles seem to be the most cost effective option for the approximate Schur complement.
\begin{table}[H]
 \caption{$W$-cycle convergence factors, $\hat{\rho}_h$, for IBSR  with inner $W(1,1)$-cycle for PrSD and $(\alpha,\omega,\omega_J)=(6/5,16/15,1.1)$. In brackets, minimum value of  the number of inner $W(1,1)$-cycles that achieves the same convergence factors as those of LFA predictions, $\rho_h$, for exact BSR.}
\centering
\begin{tabular}{|l|l|l|l|l|l|l|}
\hline
\hline
Cycle  & $W(0,1)$  & $W(1,0)$   &$W(1,1)$   & $W(1,2)$  &$W(2,1)$   &$W(2,2)$  \\
\hline
 $\rho_{h=1/128}$                   &0.673     & 0.673     &0.111     &0.079    &0.079     &0.062 \\
\hline
$\hat{\rho}_{h=1/64}$     &0.680(4)     &0.677(1)     &0.112(3)     &0.075(2)   &0.075(2)     &0.059(1)           \\
\hline
$\hat{\rho}_{h=1/128}$     &0.659(1)     &0.662(1)     &0.112(3)     &0.075(2)   &0.075(2)     &0.067(1)          \\
\hline
\end{tabular}\label{IBSR-inner-W-Dohrmann}
\end{table}

\subsection{Stabilized discretizations with Uzawa relaxation}
\label{sec:Uzawa}
Multigrid methods using Uzawa relaxation schemes  \cite{MR2833487,MR3217219,MR3580776}  are popular approaches  due to their low cost per iteration. We consider Uzawa relaxation as a simplification of BSR, determining the update as the (weighted) solution of
\begin{equation*}
   M\delta x=  \begin{pmatrix}
      \alpha D & 0\\
     B & -\hat{S}\\
    \end{pmatrix}
    \begin{pmatrix} \delta \mathcal{U} \\ \delta p\end{pmatrix}
  =\begin{pmatrix} r_{\mathcal{U}} \\ r_{p}\end{pmatrix},
\end{equation*}
where $\alpha D$ is an approximation to $A$ and $-\hat{S}$ is an approximation of the Schur complement, $-BA^{-1}B^{T}-C$.

Here, we consider an analogue to exact BSR with $ D={\rm diag}(A)$.  The choice of $\hat{S}$ is discussed later. In this setting, we observe that  minimizing the LFA smoothing
factor does not minimize the LFA convergence factor. Thus,  we  consider minimizing
the two-grid convergence factor numerically for  $\nu_1+\nu_2=1$ and $\nu_1=\nu_2=1$ with rediscretization coarsening, and compare with measured multigrid performance.

We consider three approximations to the Schur complement, starting from the true approximate Schur complement, $C+B(\alpha {\rm diag}(A))^{-1}B^{T}$. Motivated by the stable finite-element case, we also consider replacing $B(\alpha {\rm diag}(A))^{-1}B^{T}$ in this matrix by a weighted mass matrix, yielding $\hat{S}=C+\delta Q$. Finally, motivated by the finite-difference case and efficiency of implementation, we consider taking $\hat{S}=\sigma h^2I$, for a scalar weight, $\sigma$, to be optimized by the LFA. Note that, due to the constant-coefficient stencils assumed by LFA, this corresponds to using a single sweep of Jacobi to approximate solution of either of the two above approximations.

For the case of $C+B(\alpha {\rm diag}(A))^{-1}B^{T}$, the optimized LFA two-grid convergence factors for $\nu_1+\nu_2=1$ with rediscretization coarsening are 0.428 for PoSD and 0.436  for PrSD. These are notably worse than the BSR smoothing factor of $\frac{1}{3}$, which is achieved for $W(1,0)$ or $W(0,1)$ cycles. Here, $W(1,0)$ cycles reflect this convergence, achieving measured convergence factors of 0.417 for PoSD and 0.526 for PrSD. Increasing the number of relaxation sweeps per iteration yields some improvement in the predicted LFA convergence  factors when optimizing parameters again, but not enough to outperform repeated $W(1,0)$ cycles.

For the mass-matrix-based approximation, $\hat{S}=C+\delta Q$, the optimized two-grid convergence factors for $\nu_1+\nu_2=1$ with rediscretization coarsening are 0.5 for PoSD and 0.417 for PrSD. While poorer convergence might be expected in both cases, the addition of an extra parameter, $\delta$, allows a (slight) improvement for PrSD. In both cases, we observe consistent performance with numerical experiments, achieving convergence factors of 0.493 for PoSD and 0.392 for PrSD using $W(0,1)$ or $W(1,0)$ cycles.

Finally, for the diagonal approximation $\hat{S}=\sigma h^2I$, we achieve notably better performance optimizing with $\nu_1=\nu_2=1$ than for $\nu_1+\nu_2=1$. For PoSD, the optimized two-grid LFA convergence factor is 0.382, while it is 0.497 for PrSD. In practice, we achieve slightly worse convergence factors using $W(1,1)$ cycles with rediscretization coarsening, of 0.531 for PoSD and 0.543 for PrSD. These are both significantly worse than the convergence factors of $\frac{1}{9}$ observed using inexact BSR; however, it must be noted that $W$-cycles on the Schur complement system were needed in that case. A better approximation to inverting the true approximate Schur complement would be to apply multigrid to it, just as was done for IBSR above.  Here, we observe that significant work may be needed to achieve convergence similar to that of Uzawa where the Schur complement is exactly inverted, requiring 10 $W(1,1)$-cycles on the approximate Schur complement to achieve a convergence factor of 0.416 for PoSD and 0.522 for PrSD, suggesting that the Jacobi version of Uzawa is ultimately more efficient.

\subsection{Comparing cost and performance}
For convenience, we denote standard DWJ as DWJ(1) and DWJ with 2 sweeps of Jacobi relaxation on the pressure equation as DWJ(2) in the following.

The above results give a clear comparison of the effectiveness of the multigrid cycles with the considered relaxation schemes, but not of their relative efficiencies.  To translate from effectiveness to efficiency, we must properly account for the cost per iteration of each relaxation scheme.  All schemes assume the residual is already calculated; for the 9-point stencils in $A$, $B$, and stabilization terms, $C$, the cost of a single residual evaluation on a mesh with $n$ points is (roughly) that of $63n$ multiply-add operations, coming from the 7 nonzero blocks in the matrix.  For DWJ(1), the rest of the cost of relaxation is fairly easy to account, requiring one diagonal scaling operation on each of the three components of the solution vector, plus matrix-vector products with the pressure Laplacian, $A_p$, and with both $B$ and $B^T$.  Counting multiply-add operations for these on a grid with $n$ points, we have $3n$ for the diagonal scalings, and $9n$ each for the multiplication with $A_p$ and with $B_x$ and $B_y$ and their transposes, totalling $48n$ multiply-add operations. For DWJ(2), we need 48n multiply-add operators plus the cost of the second sweep. For the second sweep, we need to compute a residual related to $G$, the $(3,3)$ block  of (\ref{DWJ-system-FEM}), and a diagonal scaling.  Note that the cost of the residual is $54n$ ($9n$ each for the multiplication with $A_p$, $C$, and with $B_x$ and $B_y$ and their transposes). In total, the cost of DWJ(2) is $103n$ multiply-add operators.   For IBSR, following (\ref{schur-solution-of-precondtion}), we require two diagonal scaling operations on each of the velocity components, one matrix-vector product with each of $B$ and $B^T$, and 2 or 3 W(1,1) cycles on the pressure variable.  To account for the costs of the W(1,1) cycles, we use the standard cost estimate for W-cycles, as requiring 4 ``Work Units'' per iteration, where a Work Unit is the cost of forming a residual for the pressure equation.  Here, given the 25-point stencil structure seen in Equation (\ref{5-stencil-Jacobi}), each Work Unit requires $25$ multiply-add operations, so the total cost of IBSR with 2 W(1,1) cycles on the Schur complement is $4n + 36n + 200n = 240n$ multiply-add operations (and $340n$ multiply-add operations if 3 W(1,1) cycles are needed).  Finally, Uzawa relaxation with diagonal scaling on the pressure has a cost less than that of DWJ(1), as it requires diagonal scaling again for all three components of the solution, but only one matrix-vector multiplication, with $B$.  These total $21n$ multiply-add operations.

Accumulating the costs of a residual evaluation with these, we have total costs of $111n$ multiply-add operations per sweep of DWJ(1), $166n$ multiply-add operations per sweep of DWJ(2),  $303n$ multiply-add operations per sweep of IBSR with 2 W(1,1) cycles per Schur-complement solve, and $84n$ multiply-add operations per sweep of Uzawa with diagonal scaling.  Considering these relative to one-another, we see that DWJ(1) has a cost of about 4/3 per cycle as Uzawa, that DWJ(2) has a per-cycle cost of about 2 times that of Uzawa, and 1.5 times that of DWJ(1), that IBSR has a per-cycle cost of about 3.6 times that of Uzawa, 2.7 and 1.8 times that of DWJ(1) and DWJ(2), respectively.  The per-cycle convergence factors observed above are 0.35 per cycle for W(1,1) cycles of DWJ(1) for PoSD and 0.44 per cycle for W(1,1) cycles of DWJ(1) for PrSD, 0.11 per cycle for W(1,1) cycles of DWJ(2) and IBSR for both stabilizations, and 0.53 or 0.54 per cycle for W(1,1) cycles with Uzawa.  Comparing efficiencies can now be easily done by appropriately weighting these convergence factors relative to their work: if one iteration costs $W$ times that of another, and yields a convergence factor of $\rho_1$, then we can easily compare $\rho_1^{1/W}$ directly to the second convergence factor, $\rho_2$, to see if the {\it effective} error reduction achieved by the first algorithm in an equal amount of work to the second is better or worse than that achieved by the second.  Comparing DWJ(1) to Uzawa, then, for PoSD, we compare $0.35^{3/4} \approx 0.46$ to $0.53$ and see that DWJ(1) is more efficient.  For PrSD, we compare $0.44^{3/4} \approx 0.54$ and see that DWJ(1) and Uzawa are similarly efficient for PrSD. Comparing DWJ(2) to Uzawa, we compare $0.11^{1/2}\approx 0.33$ to 0.53(0.54), showing that DWJ(2) is much efficient than Uzawa. Comparing DWJ(2) to DWJ(1), we compare $0.11^{1/1.5}\approx 0.23$ to 0.35 (0.45) and see DWJ(2) is more efficient than DWJ(1). Comparing IBSR to Uzawa, we compare $0.11^{1/3.6} \approx 0.54$, and see that it as also comparable in efficiency to the others for the case of PrSD, but slightly less efficient than DWJ(1) for PoSD. DWJ(2) and IBSR have the same per-cycle convergence factor, but the cost of DWJ(2) ($166n$) is less  than IBSR ($303n$). Thus DWJ(2) is more efficient than IBSR.  Overall, DWJ(2) outperforms Uzawa, IBSR and DWJ(1). We note that these results are a little different than those seen for the MAC discretization in \cite{NLA2147}, where IBSR outperforms other schemes. Differences seen in practice (and the influence of factors ignored in the LFA, such as boundary conditions) are important to consider.

An important practical consideration commonly observed in the LFA literature (see, for example, \cite{NLA2147,MR1049395}) is the influence of boundary conditions.  In numerical experiments not shown here, we often see significant degradation in convergence between the results for periodic boundary conditions and those for Dirichlet boundary conditions, particularly for cases with larger numbers of relaxation sweeps per cycle.  For DWJ(2) with $\nu_1 + \nu_2 = 1$, changing from periodic to Dirichlet boundary conditions results in convergence factors increasing from 0.324 reported in Tables \ref{stabilized-R-W-2Jacobi} and \ref{PrSD-W-2Jacobi} to about 0.46 (PrSD) or 0.56 (PoSD) for two-grid cycles, and to about 0.64 (PrSD) and 0.7 (PoSD) for W-cycles.  For IBSR with $\nu_1 + \nu_2 = 1$, however, the degradation is much less, with W-cycle convergence rates of 0.38 for PoSD (still with 2 inner W(1,1)-cycles for the Schur complement system) and 0.35 for PrSD (with $\alpha = 4/5$, $\omega = 32/45$, and 4 W(1,1) cycles with $\omega_J=1.1$ for the Schur complement system).  Clearly this difference in performance is enough to change the balance above, with the added cost of IBSR with inner W-cycles paying off over DWJ(2).

\section{Relaxation for $Q_2-Q_1$ discretization}\label{sec:Q2Q1performance}
As explored in \cite{HM2018LFALaplace}, classical LFA smoothing factor analysis is unreliable for $Q_2$ discretizations, making it unsuitable for analysis of the standard stable $Q_2-Q_1$ discretization of the Stokes equations. Thus, we consider only numerical (``brute force'') optimization of two-grid LFA convergence factors in this setting.

For DWJ, we find optimal convergence factors of 0.619 for $\nu_1+\nu_2=1$ and 0.558 for $\nu_1=\nu_2=1$. While the former is quite comparable to convergence predicted and achieved for both stabilized discretizations with $\nu_1+\nu_2=1$, we see a significant lack of improvement with increased relaxation, in contrast to the equal-order case. The same is observed for multigrid $W$-cycle performance, with $W(1,0)$ convergence measured at 0.620 and $W(1,1)$ convergence measured at 0.510.

For exact BSR, we find optimal convergence factors of 0.551 for $\nu_1+\nu_2=1$ and 0.250 for $\nu_1=\nu_2=1$. While these are slightly larger than the comparable factors of $\frac{1}{3}$ and $\frac{1}{9}$, respectively, for the stabilized discretizations, they still reflect good performance of the underlying method.

At left of Figure \ref{BSR2-contour}, we show the spectral radius of the error-propagation symbol for exact BSR as a function of Fourier frequency, $\boldsymbol{\theta}$, noting that predicted reduction over the high frequencies is not as good as would be needed to equal two-grid convergence in the equal-order case. In order to see how the convergence factor  changes with the parameters $\alpha$ and $\omega$, we display the convergence factor as a function of $\alpha$ and $\omega$ at the right of Figure \ref{BSR2-contour}. The optimal choice, of $\alpha=1.1$ and $\omega=1.05$, occurs in a narrow band of $\omega$ values, but larger range of $\alpha$ values lead to reasonable results.

\begin{figure}[H]
\centering
\includegraphics[width=6.5cm,height=5.5cm]{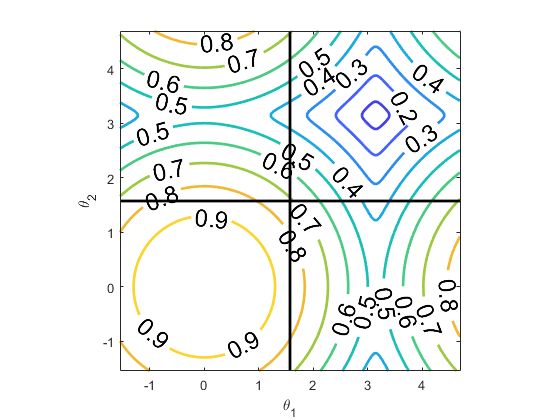}
\includegraphics[width=6.5cm,height=5.5cm]{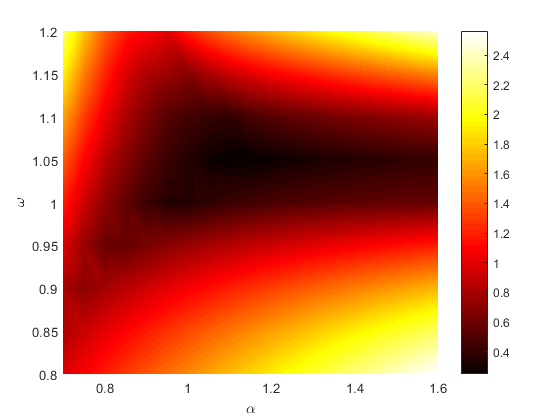}
\caption{At left, the spectral radius of the error-propagation symbol for exact BSR applied to the $Q_2-Q_1$ discretization, as a function of the Fourier mode, $\boldsymbol{\theta}$. At right, the LFA-predicted two-grid convergence factor for BSR applied to the $Q_2-Q_1$ discretization as a function of $\alpha$ and $\omega$, with $(\nu_1,\nu_2)=(1,1)$.} \label{BSR2-contour}
\end{figure}

As always, an inexact solve of the Schur complement system is needed to yield a practical variant of BSR. While 2 sweeps of Jacobi appears sufficient to achieve scalable $W$-cycle convergence when $\nu_1+\nu_2>2$ (see Table \ref{PBC-W-IBSR-2}), we find 3 sweeps are needed to achieve $W(1,1)$ convergence factors of 0.240 (see Table \ref{PBC-W-BSR-3}), in contrast to results in \cite{MR1810326} and for the equal-order discretizations considered here, where a much stronger iteration was needed. Similar results were seen for $V(1,1)$ cycles  when 3 sweeps of Jacobi were used for the Schur complement system.

\begin{table}[H]
 \caption{$W$-cycles convergence factors, $\hat{\rho}_h$, for IBSR with 2 sweeps Jacobi for $Q_2-Q_1$ approximation with rediscretization, compared with LFA two-grid predictions, $\rho_h$, for exact BSR with algorithmic parameters, $\alpha=1.1,\omega=1.05$ and $\omega_J=1.0$.}
\centering
\begin{tabular}{|l|c|c|c|c|c|c|}
\hline
Cycle   & $W(0,1)$  & $W(1,0)$   &$W(1,1)$   & $W(1,2)$  &$W(2,1)$   &$W(2,2)$  \\
\hline
\hline
$\rho_{h=1/128}$                   &4.893     &4.893      &0.249     &0.109      &0.109     &0.090\\
\hline
$\hat{\rho}_{h=1/64}$     & NAN      &NAN     &0.434     & 0.131      &0.130     &0.085       \\
\hline
$\hat{\rho}_{h=1/128}$     &NAN      &NAN     & 0.437    & 0.130    &0.130     &0.085 \\
\hline
\end{tabular}\label{PBC-W-IBSR-2}
\end{table}

\begin{table}[H]
 \caption{$W$-cycles convergence factors, $\hat{\rho}_h$, for IBSR with 3 sweeps Jacobi for $Q_2-Q_1$ approximation with rediscretization, compared with LFA two-grid predictions, $\rho_h$, for exact BSR with algorithmic parameters, $\alpha=1.1,\omega=1.05$ and $\omega_J=1.0$.}
\centering
\begin{tabular}{|l|c|c|c|c|c|c|}
\hline
Cycle   & $W(0,1)$  & $W(1,0)$   &$W(1,1)$   & $W(1,2)$  &$W(2,1)$   &$W(2,2)$  \\
\hline
\hline
$\rho_{h=1/128}$                    &4.893     &4.893      &0.249     &0.109      &0.109     &0.090\\
\hline
$\hat{\rho}_{h=1/64}$     & 491.373     &492.094     &0.240     & 0.104     &0.104     &0.085       \\
\hline
$\hat{\rho}_{h=1/128}$     &NAN      &NAN     & 0.240    & 0.104    &0.104       &0.085 \\
\hline
\end{tabular}\label{PBC-W-BSR-3}
\end{table}

Finally, we consider the same three variants of Uzawa relaxation as examined above for the equal-order case. For $\hat{S}=B(\alpha{\rm diag}(A))^{-1}B^{T}$, the best convergence factor found for $\nu_1+\nu_2=1$ was 0.729, while better convergence was predicted for $\hat{S}=\delta Q$, with factor 0.554. This is to be expected, perhaps, since the $Q_1$ mass matrix is well-known to be a better approximation of the true Schur complement than the classical BSR approximate Schur complement. However, approximating either by a single sweep of Jacobi, yielding $\hat{S}=\sigma h^2I$, gives a convergence factor 0.717. While 2-grid cycles with $\nu_1+\nu_2=1$ match the predicted convergence factor, $W$-cycles did not converge for these parameters.

Comparing, then, the efficiency of inexact BSR and DWJ for the $Q_2-Q_1$ discretization, we see that inexact BSR, where $W(1,1)$ cycles achieve a convergence factor of 0.24 provides roughly the  same reduction as 3 cycles with 1 DWJ sweep per cycle, where LFA predicts $\rho=0.619$. Noting that inexact BSR is relatively more expensive in this case, with cost dominated by the two diagonal scalings per sweep on the $Q_2$ velocity degrees of freedom, we suggest a proper implementation study is needed to determine which, if either, provides best performance in practice.

\section{Conclusion}
\label{sec:concl-FEM}
In this paper, LFA is presented for block-structured relaxation schemes for stabilized and stable finite-element discretizations of
the Stokes equations. The convergence and smoothing factors  exhibited  here provide  optimized parameters for DWJ with one or two sweeps of Jacobi relaxation on the pressure equation and  BSR for
the stabilized discretizations. The convergence of (inexact) BSR clearly outperforms multigrid with both standard DWJ and Uzawa relaxation. However, standard DWJ can be improved by additional relaxation on the pressure equation, and the improved version is more efficient than IBSR.  While the LFA smoothing
factor loses its predictivity of  the two-grid convergence factor for the stable $Q_2-Q_1$ discretization and for Uzawa relaxation for both
stabilized and   stable discretizations, the two-grid LFA convergence factor can still provide useful predictions. We consider as well the inexact case for BSR, with Jacobi iterations or multigrid cycles used to approximate solution of the Schur complement system, as is suitable for use
on modern  parallel and graphics processing unit (GPU) architectures. From numerical experiments, we see that inexact BSR can be as good as the exact iteration for solving the Stokes equations.  The analysis and LFA predictions demonstrated here offer good insight into the use of block-structured
relaxation for other types of saddle-point problems, which will be considered in future work.

\section*{Acknowledgements}
The work of S.M. was partially supported by an NSERC discovery grant.

\bibliographystyle{elsarticle-num}
\bibliography{FEM_Stokes}

\begin{thebibliography}{10}
\expandafter\ifx\csname url\endcsname\relax
  \def\url#1{\texttt{#1}}\fi
\expandafter\ifx\csname urlprefix\endcsname\relax\def\urlprefix{URL }\fi
\expandafter\ifx\csname href\endcsname\relax
  \def\href#1#2{#2} \def\path#1{#1}\fi

\bibitem{elman2006finite}
H.~C. Elman, D.~J. Silvester, A.~J. Wathen, Finite elements and fast iterative
  solvers with applications in incompressible fluid dynamics, 2nd Edition,
  Numerical Mathematics and Scientific Computation, Oxford University Press,
  Oxford, 2014.

\bibitem{ruge1987algebraic}
J.~W. Ruge, K.~St{\"u}ben, Algebraic multigrid, Multigrid methods 3~(13) (1987)
  73--130.

\bibitem{adler2016monolithic}
J.~Adler, T.~R. Benson, E.~Cyr, S.~P. MacLachlan, R.~S. Tuminaro, Monolithic
  multigrid methods for two-dimensional resistive magnetohydrodynamics, SIAM J.
  Sci. Comput. 38~(1) (2016) B1--B24.

\bibitem{MR3439774}
J.~H. Adler, D.~B. Emerson, S.~P. MacLachlan, T.~A. Manteuffel, Constrained
  optimization for liquid crystal equilibria, SIAM J. Sci. Comput. 38~(1)
  (2016) B50--B76.

\bibitem{MR558216}
A.~Brandt, N.~Dinar, Multigrid solutions to elliptic flow problems, in:
  Numerical methods for partial differential equations, Vol.~42 of Publ. Math.
  Res. Center Univ. Wisconsin, Academic Press, New York-London, 1979, pp.
  53--147.

\bibitem{MR2833487}
M.~A. Olshanskii, Multigrid analysis for the time dependent {S}tokes problem,
  Math. Comp. 81~(277) (2012) 57--79.

\bibitem{MR2001083}
J.~Sch{\"o}berl, W.~Zulehner, On {S}chwarz-type smoothers for saddle point
  problems, Numer. Math. 95~(2) (2003) 377--399.

\bibitem{MR3639325}
J.~H. Adler, T.~R. Benson, S.~P. MacLachlan, Preconditioning a mass-conserving
  discontinuous {G}alerkin discretization of the {S}tokes equations, Numer.
  Linear Algebra Appl. 24~(3) (2017) e2047, 23.

\bibitem{farrell2018augmented}
P.~E. Farrell, L.~Mitchell, F.~Wechsung, An augmented lagrangian preconditioner
  for the {3D} stationary incompressible {N}avier-{S}tokes equations at high
  {R}eynolds number, arXiv preprint arXiv:1810.03315.

\bibitem{MR1807961}
U.~Trottenberg, C.~W. Oosterlee, A.~Sch{\"u}ller, Multigrid, Academic Press,
  Inc., San Diego, CA, 2001, with contributions by A. Brandt, P. Oswald and K.
  St{\"u}ben.

\bibitem{wienands2004practical}
R.~Wienands, W.~Joppich, Practical {F}ourier analysis for multigrid methods,
  CRC press, 2004.

\bibitem{brandt1994rigorous}
A.~Brandt, Rigorous quantitative analysis of multigrid, {I}. constant
  coefficients two-level cycle with {$L_2$}-norm, SIAM Journal on Numerical
  Analysis 31~(6) (1994) 1695--1730.

\bibitem{stevenson1990validity}
R.~P. Stevenson, On the validity of local mode analysis of multi-grid methods,
  Ph.D. thesis, Utrecht University (1990).

\bibitem{rodrigo2017validity}
C.~Rodrigo, F.~J. Gaspar, L.~T. Zikatanov, On the validity of the local
  {F}ourier analysis, Journal of Computational Mathematics 37~(3) (2019)
  340--348.

\bibitem{HM2018LFALaplace}
Y.~He, S.~P. MacLachlan, Two-level {F}ourier analysis of multigrid for
  higher-order finite-element methods, revised.

\bibitem{MR2840198}
S.~P. MacLachlan, C.~W. Oosterlee, Local {F}ourier analysis for multigrid with
  overlapping smoothers applied to systems of {PDE}s, Numer. Linear Algebra
  Appl. 18~(4) (2011) 751--774.

\bibitem{friedhoff2013local}
S.~Friedhoff, S.~MacLachlan, C.~B{\"o}rgers, Local {F}ourier analysis of
  space-time relaxation and multigrid schemes, SIAM Journal on Scientific
  Computing 35~(5) (2013) S250--S276.

\bibitem{NLA2147}
Y.~He, S.~P. MacLachlan, Local {F}ourier analysis of block-structured multigrid
  relaxation schemes for the {S}tokes equations, Numerical Linear Algebra with
  Applications 25~(3), e2147.

\bibitem{braess1997efficient}
D.~Braess, R.~Sarazin, An efficient smoother for the {S}tokes problem, Appl.
  Numer. Math. 23~(1) (1997) 3--19.

\bibitem{MR1685458}
D.~Braess, W.~Dahmen, A cascadic multigrid algorithm for the {S}tokes
  equations, Numer. Math. 82~(2) (1999) 179--191.

\bibitem{MR1810326}
W.~Zulehner, A class of smoothers for saddle point problems, Computing 65~(3)
  (2000) 227--246.

\bibitem{MR848451}
S.~P. Vanka, Block-implicit multigrid solution of {N}avier-{S}tokes equations
  in primitive variables, J. Comput. Phys. 65~(1) (1986) 138--158.

\bibitem{MR2263039}
S.~Manservisi, Numerical analysis of {V}anka-type solvers for steady {S}tokes
  and {N}avier-{S}tokes flows, SIAM J. Numer. Anal. 44~(5) (2006) 2025--2056.

\bibitem{MR3488076}
C.~Rodrigo, F.~J. Gaspar, F.~J. Lisbona, On a local {F}ourier analysis for
  overlapping block smoothers on triangular grids, Appl. Numer. Math. 105
  (2016) 96--111.

\bibitem{john2016analysis}
L.~John, U.~R{\"u}de, B.~Wohlmuth, W.~Zulehner, On the analysis of block
  smoothers for saddle point problems, arXiv preprint arXiv:1612.01333.

\bibitem{MR3217219}
F.~J. Gaspar, Y.~Notay, C.~W. Oosterlee, C.~Rodrigo, A simple and efficient
  segregated smoother for the discrete {S}tokes equations, SIAM J. Sci. Comput.
  36~(3) (2014) A1187--A1206.

\bibitem{MR3580776}
B.~Gmeiner, M.~Huber, L.~John, U.~R\"ude, B.~Wohlmuth, A quantitative
  performance study for {S}tokes solvers at the extreme scale, J. Comput. Sci.
  17~(part 3) (2016) 509--521.

\bibitem{MR833993}
J.-F. Maitre, F.~Musy, P.~Nigon, A fast solver for the {S}tokes equations using
  multigrid with a {U}zawa smoother, in: Advances in multigrid methods
  ({O}berwolfach, 1984), Vol.~11 of Notes Numer. Fluid Mech., Vieweg+Teubner
  Verlag, Wiesbaden, 1985, pp. 77--83.

\bibitem{bacuta2011new}
C.~Bacuta, P.~S. Vassilevski, S.~Zhang, A new approach for solving {S}tokes
  systems arising from a distributive relaxation method, Numerical Methods for
  Partial Differential Equations 27~(4) (2011) 898--914.

\bibitem{wittum1989multi}
G.~Wittum, Multi-grid methods for {S}tokes and {N}avier-{S}tokes equations,
  Numer. Math. 54~(5) (1989) 543--563.

\bibitem{MR3071182}
M.~Wang, L.~Chen, Multigrid methods for the {S}tokes equations using
  distributive {G}auss-{S}eidel relaxations based on the least squares
  commutator, J. Sci. Comput. 56~(2) (2013) 409--431.

\bibitem{oosterlee2006multigrid}
C.~W. Oosterlee, F.~J. Gaspar, Multigrid methods for the {S}tokes system,
  Computing in {S}cience {$\&$} {E}ngineering 8~(6) (2006) 34--43.

\bibitem{MR3427891}
L.~Chen, Multigrid methods for saddle point systems using constrained
  smoothers, Comput. Math. Appl. 70~(12) (2015) 2854--2866.

\bibitem{MR3432850}
S.~Takacs, A robust multigrid method for the time-dependent {S}tokes problem,
  SIAM J. Numer. Anal. 53~(6) (2015) 2634--2654.

\bibitem{MR1049395}
A.~Niestegge, K.~Witsch, Analysis of a multigrid {S}tokes solver, Appl. Math.
  Comput. 35~(3) (1990) 291--303.

\bibitem{sivaloganathan1991use}
S.~Sivaloganathan, The use of local mode analysis in the design and comparison
  of multigrid methods, Computer Physics Communications 65~(1-3) (1991)
  246--252.

\bibitem{MR1131567}
J.~Molenaar, A two-grid analysis of the combination of mixed finite elements
  and {V}anka-type relaxation, in: Multigrid methods, {III} ({B}onn, 1990),
  Vol.~98 of Internat. Ser. Numer. Math., Birkh\"auser, Basel, 1991, pp.
  313--323.

\bibitem{wathen2009chebyshev}
A.~J. Wathen, T.~Rees, Chebyshev semi-iteration in preconditioning for problems
  including the mass matrix, Electronic Transactions on Numerical Analysis
  34~(125-135) (2009) 125--135.

\bibitem{wathen1993fast}
A.~Wathen, D.~Silvester, Fast iterative solution of stabilised {S}tokes
  systems. part {I}: {U}sing simple diagonal preconditioners, SIAM J. Numer.
  Anal. 30~(3) (1993) 630--649.

\bibitem{silvester1994fast}
D.~Silvester, A.~Wathen, Fast iterative solution of stabilised {S}tokes systems
  part {II}: using general block preconditioners, SIAM J. Numer. Anal. 31~(5)
  (1994) 1352--1367.

\bibitem{MR2373954}
S.~C. Brenner, L.~R. Scott, The mathematical theory of finite element methods,
  3rd Edition, Vol.~15 of Texts in Applied Mathematics, Springer, New York,
  2008.

\bibitem{brezzi1988stabilized}
F.~Brezzi, J.~Douglas, Stabilized mixed methods for the {S}tokes problem,
  Numer. Math. 53~(1) (1988) 225--235.

\bibitem{dohrmann2004stabilized}
C.~R. Dohrmann, P.~B. Bochev, A stabilized finite element method for the
  {S}tokes problem based on polynomial pressure projections, International
  Journal for Numerical Methods in Fluids 46~(2) (2004) 183--201.

\bibitem{chen2015multigrid}
L.~Chen, X.~Hu, M.~Wang, J.~Xu, A multigrid solver based on distributive
  smoother and residual overweighting for {O}seen problems, Numerical
  Mathematics: Theory, Methods and Applications 8~(02) (2015) 237--252.

\bibitem{adler2016constrained}
J.~H. Adler, D.~Emerson, S.~P. MacLachlan, T.~A. Manteuffel, Constrained
  optimization for liquid crystal equilibria, SIAM Journal on Scientific
  Computing 38~(1) (2016) B50--B76.

\end{thebibliography}

\end{document}